\newcommand{\comment}[1]{}
\newcommand{\bR}{{\mathbb R}}
\newcommand{\bZ}{{\mathbb Z}}
\newcommand{\bB}{{\mathbb B}}
\def\H{{\mathcal H}}
\def\F{{\mathcal F}}
\def\h{\mathfrak h}
\def\M{{\mathcal M}}
\def\S{{\mathcal S}}
\def\bmo{{\mathfrak {bmo}}}
\def\BMO{B\! M\! O}
\def\div{{\mbox{\small\rm  div}}\,}
\def\curl{{\mbox{\small\rm  curl}}\,}
\def\Msob{{\M_{ Sob}}}
\newcounter{rea}
\newcounter{rej}
\newcounter{res}
\newtheorem{thm}{Theorem}[section]
\newtheorem{lem}[thm]{Lemma}
\newtheorem{defn}[thm]{Definition}
\begin{document}

\title[]{Endpoint for the div-curl lemma \\
in Hardy spaces}

\author[A. Bonami]{Aline Bonami}
\address{MAPMO-UMR 6628,
D\'epartement de Math\'ematiques, Universit\'e d'Orleans,
45067 Orl\'eans Cedex 2, France}
\email{{\tt Aline.Bonami@univ-orleans.fr}}
\author[J. Feuto]{Justin Feuto}
\address{Laboratoire de Math\'ematiques Fondamentales, UFR Math\'ematiques et Informatique, Universit\'e de Cocody, 22 B.P 1194 Abidjan 22. C\^{o}te d'Ivoire}
\email{{\tt justfeuto@yahoo.fr}}
\author[S. Grellier]{Sandrine Grellier}
\address{MAPMO-UMR 6628,
D\'epartement de Math\'ematiques, Universit\'e d'Orleans,
45067 Orl\'eans Cedex 2, France}
\email{{\tt Sandrine.Grellier@univ-orleans.fr}}

\subjclass{42B30 (58A10)}
\keywords{Hardy-spaces, Hardy-Orlicz spaces, div-curl lemma, differential forms.}
\thanks{The authors are partially supported by the project ANR AHPI number ANR-07-BLAN-0247-01.}
\begin{abstract} We give a
$\div$-$\curl$ type lemma for the wedge product of closed
differential forms on $\bR^n$ when they have coefficients
respectively in a Hardy space and  $L^\infty$ or a space of $BMO$ type. In this last case,
the wedge product belongs to an appropriate Hardy-Orlicz space.
\end{abstract}
\maketitle

\section{Introduction}
The theory of compensated
compactness initiated and developed by L. Tartar
\cite{{Ta}} and F. Murat \cite{{M}} has been largely studied and extended to various
setting. The famous paper of Coifman, Lions, Meyer and Semmes ( \cite{CLMS}) gives an overview of this theory in the context of Hardy spaces in the Euclidean space $\bR^{n}$ $(n\geq
1)$. They prove in particular,
that, for $\frac{n}{n+1}<p,q<\infty$ such that
$\frac{1}{p}+\frac{1}{q}<1+\frac{1}{n}$,  when $F$ is a vector
field belonging to the Hardy space $\H^{p}(\bR^{n},\bR^{n})$ with
$\curl F=0$ and $G$ is a vector field belonging to
$\H^{q}(\bR^{n},\bR^{n})$ with $\div G=0$, then the scalar product
$F\cdot G$ can be given a meaning as a distribution of
$\H^{r}(\bR^{n})$ with
\begin{equation}
\left\|F\cdot G\right\|_{\H^{r}(\bR^{n})}\leq C\left\|F\right\|_{\H^{p}(\bR^{n},\bR^{n})}\left\|G\right\|_{\H^{q}(\bR^{n},\bR^{n})}
\end{equation}
where $\frac{1}{r}=\frac{1}{p}+\frac{1}{q}$.

The endpoint $\frac{n}{n+1}$ is related to cancellation properties
of Hardy spaces: bounded functions with compact support and zero
mean do not belong to $\H^{\frac n {n+1}}$ unless their moments of
order one are zero, a property that the scalar product $F.G$ does
not have in general.

We shall consider here the endpoint $q=\infty$. Let us first start
by some description of what is known. Auscher, Russ and
Tchamitchian remarked in \cite{ART} that, for $p=1$, one has,
under the same assumptions of being respectively curl free and
divergence free,
\begin{equation}
\left\|F\cdot G\right\|_{\H^{1}(\bR^{n})}\leq C\left\|F\right\|_{\H^{1}(\bR^{n},\bR^{n})}\left\|G\right\|_{L^{\infty}(\bR^{n},\bR^{n})}.
\end{equation}
In fact it is easy to see that the proof given in \cite{CLMS} is
also valid for $q=\infty$. They give in \cite{ART} another
proof, which has its own interest and has helped us in our
generalization to $\BMO$. Remark that the scalar product does not
make sense in general when $F$ is in $\H^{p}(\bR^{n},\bR^{n})$ for
$p<1$, so that one can only write a priori estimates, such as the
following one.

\begin{thm}\label{main01}
Let $\frac{n}{n+1}<p\leq1$.  If  $F\in \H^{p}(\bR^{n},\bR^{n})$ is
integrable and such that $\curl\,F=0$ and if $G\in
L^{\infty}(\bR^{n},\bR^{n})$ with $\div G=0$, then there exists a
constant $C$, independent of $F$ and $G$, such that
\begin{equation}\label{p-div-curl}
\left\| F\cdot G\right\|_{\H^{p}(\bR^{n})}\leq C
\left\|F\right\|_{\H^{p}(\bR^{n},\bR^{n})}\left\|G\right\|_{L^{\infty}(\bR^{n},\bR^{n})}.
\end{equation}
\end{thm}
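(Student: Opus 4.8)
The plan is to pass to the maximal function defining $\H^p$ and to exploit the two differential constraints through a Hardy--Sobolev estimate for a scalar potential of $F$. Fix $\phi\in\mathcal{S}(\bR^n)$ with $\int_{\bR^n}\phi=1$ and put $\phi_t(z)=t^{-n}\phi(z/t)$. By the maximal characterisation of Hardy spaces it suffices to prove that the maximal function $\M_\phi(F\cdot G)(x):=\sup_{t>0}|(F\cdot G)*\phi_t(x)|$ lies in $L^p(\bR^n)$, with a bound by $C\|F\|_{\H^p}\|G\|_{L^\infty}$; this is meaningful because $F\cdot G\in L^1$ by the integrability of $F$. Since $\curl F=0$ on $\bR^n$ we write $F=\nabla f$, and since $F\in L^1$ the potential can be normalised so that $f\in L^1_{loc}(\bR^n)$ (in fact $f\in W^{1,1}_{loc}$). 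Mollifying $G$ by a divergence-free approximate identity and using $\div G=0$, the product rule gives $F\cdot G=\div(fG)$ in the sense of distributions; in the formalism of the paper this is $F\cdot G\,dx=df\wedge\omega_G=d(f\,\omega_G)$, with $\omega_G$ the closed $(n-1)$-form attached to $G$.

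Integrating by parts against $\phi_t$ and using $\int_{\bR^n}\nabla\phi=0$ to subtract the mean $f_{B(x,t)}$ of $f$ over the ball $B(x,t)$,
\[
(F\cdot G)*\phi_t(x)=\frac1t\int_{\bR^n}\bigl(f(y)-f_{B(x,t)}\bigr)\,G(y)\cdot(\nabla\phi)_t(x-y)\,dy ,
\]
so that
\[
\bigl|(F\cdot G)*\phi_t(x)\bigr|\leq \frac{\|G\|_{L^\infty}}{t}\int_{\bR^n}\bigl|f(y)-f_{B(x,t)}\bigr|\,\bigl|(\nabla\phi)_t(x-y)\bigr|\,dy .
\]
Splitting this integral over the dyadic annuli $2^{j-1}t\leq|x-y|<2^jt$, bounding $|(\nabla\phi)_t|$ by $C2^{-jN}t^{-n}$ there, and telescoping the averages $f_{B(x,2^jt)}-f_{B(x,t)}$ (the resulting geometric factors being absorbed once $N$ is large), one obtains the pointwise bound
\[
\M_\phi(F\cdot G)(x)\leq C\,\|G\|_{L^\infty}\,\mathcal{N}f(x),\qquad \mathcal{N}f(x):=\sup_{\rho>0}\frac1\rho\cdot\frac1{|B(x,\rho)|}\int_{B(x,\rho)}\bigl|f-f_{B(x,\rho)}\bigr| .
\]

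Everything thus reduces to the inequality $\|\mathcal{N}f\|_{L^p(\bR^n)}\leq C\|\nabla f\|_{\H^p}=C\|F\|_{\H^p}$ for $\frac n{n+1}<p\leq1$, which is the heart of the matter. It is exactly here that the endpoint $\frac n{n+1}$, and the use of the Hardy space rather than merely $F\in L^p$, enter: the naive route would be the Sobolev--Poincar\'e inequality
\[
\frac1\rho\cdot\frac1{|B|}\int_{B}\bigl|f-f_{B}\bigr|\leq C\Bigl(\frac1{|2B|}\int_{2B}|F|^{q_0}\Bigr)^{1/q_0},\qquad q_0=\tfrac n{n+1},
\]
which would give $\M_\phi(F\cdot G)\leq C\|G\|_{L^\infty}\bigl(M(|F|^{q_0})\bigr)^{1/q_0}$, a function in $L^p$ precisely when $p>q_0$ (by boundedness of the Hardy--Littlewood operator $M$ on $L^{p/q_0}$); but that inequality is \emph{false} for $q_0<1$. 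The correct substitute is the maximal-function characterisation of the Hardy--Sobolev space $\{f:\nabla f\in\H^p\}$, valid for $\frac n{n+1}<p\leq1$, which furnishes $\mathcal{N}f\in L^p$ with the required control. I expect this step --- together with the care needed to define and manipulate the potential $f$ under the sole hypothesis $F\in\H^p\cap L^1$, in particular to justify all the integrations by parts --- to be the main obstacle. Granting it, combining with the pointwise bound above yields \eqref{p-div-curl}. The case $n=1$ is trivial ($\div G=0$ forces $G$ constant), and the same scheme, with $f$ replaced by a potential of the closed factor of lowest degree, proves the general wedge-product statement.
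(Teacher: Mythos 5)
Your proof is correct and is essentially the paper's own approach: the reduction $F\cdot G=\div(fG)$ followed by subtraction of the mean of the potential $f$ over $B(x,t)$ yields exactly the pointwise bound $\M(F\cdot G)\le C\|G\|_{L^\infty}\Msob(F)$, and the key estimate $\|\Msob(F)\|_{L^p}\le C\|F\|_{\H^p}$ that you isolate as the heart of the matter is precisely \eqref{meyer} of Lemma \ref{lemmedeMeyer} (Lemma II.2 of \cite{CLMS}), which the paper likewise invokes without reproving. The only differences are cosmetic: the paper states the result for wedge products of closed $\ell$-forms (Theorem \ref{main1}) and also records a second, ART-style proof via the grand maximal function $\vec{\mathcal M}_{q,d}$ adapted to closed forms, but your CLMS-style argument coincides with the paper's first proof specialized to $\ell=1$, $q=\infty$.
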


This a priori estimate allows to give a meaning to $F\cdot G$ in
the distribution sense. There is no hope to give such a meaning
for general products $fg$, with $f\in \H^p(\bR^n)$ and $g\in
L^\infty$. It is proved in \cite{BF} that this is possible when
$g$ is in the inhomogeneous Lipschitz space
$\Lambda_\alpha(\bR^n)$, with $\alpha=n(\frac 1 p-1)$. Moreover,
one has
\begin{equation}
fg\in L^{1}(\bR^{n})+\H^{p}(\bR^{n})
\end{equation}
for $f$ in $\H^{p}$ (with $p<1$) and $g$ in
$\Lambda_{n\left(\frac{1}{p}-1\right)}$. So, cancellation
properties of the scalar product of curl free and divergence free
vector fields allow to get rid of the integrable part and to
weaken the assumptions from Lipschitz to bounded.

We will show in Section 4 that this generalizes to wedge products
of closed forms. Remark that end-point estimates would imply all
other ones by interpolation if we could interpolate between $\H^p$
spaces of closed forms. Indeed, for instance, the generalization
to closed forms allows to have \eqref{p-div-curl} when assumptions
on the two factors are exchanged: $F$ is bounded and $G$ is in $
\H^{p}(\bR^{n},\bR^{n})$. Unfortunately, one does not know whether
one can interpolate: while there is a bounded projection onto
closed forms in $\H^p$ for $p<\infty$, it is not the case for
$p=\infty$.

\bigskip

The core of this paper concerns $\div$-$\curl$ lemmas (and their
extensions for the wedge product of closed forms) when the
assumption to be bounded is weakened into an assumption of type
$\BMO$.  Products of functions in $\H^1$ and $\BMO$ have been
considered by Bonami, Iwaniec, Jones and Zinsmeister in
\cite{BIJZ}. Such products make sense as distributions, and can be
written as the sum of an integrable function and a function in a
weighted Hardy-Orlicz space. In order to have a $\div$-$\curl$ lemma in this context, we  make some restriction for one of the two factors. Recall that  $\bmo:=\bmo(\bR^{n})$ is the set of
locally integrable functions $b$ satisfying
\begin{equation}\label{bmo}
\sup_{|B|\leq 1}\left( \frac 1 {|B|}\int_B
|b-b_B|dx\right)<\infty\ \ \ \ \mbox{\rm and } \sup_{|B|\geq
1}\left( \frac 1 {|B|}\int_B |b|dx\right)<\infty
\end{equation}
with $B$ varying among all balls of $\bR^n$ and $|B|$ denoting the
measure of the ball $B$. The sum of the two finite quantities will
be denoted by $\left\|b\right\|_{\bmo}$. Then $\bmo$ is well-known to be  the dual space of the localized version of the Hardy space, which we note $\h^{1}(\bR^{n})$, see \cite{G}.
 To be more precise, for $f\in\H^1(\bR^n)$ and $g\in \bmo$, we define the product (in the distribution
 sense)
$f g$ as the distribution whose action on the Schwartz function
$\varphi\in\mathcal S(\bR^{n})$ is given by
\begin{equation}
\left\langle f g,\varphi\right\rangle:=\left\langle \varphi
g,f\right\rangle,
\end{equation}
where the second bracket stands for the duality bracket between
$\H^1$ and $\BMO$. It is then proved in
\cite{BIJZ} that
\begin{equation}\label{nocancel}
f g\in L^{1}(\bR^{n})+\H^{\Phi}_\omega(\bR^{n}).
\end{equation}
Here $\mathcal H^\Phi_\omega(\bR^n)$ is the weighted Hardy-Orlicz space related to the
Orlicz function
\begin{equation}\label{orl-log}
\Phi(t):=\frac t{\log (e+t)}
\end{equation}
and with weight $\omega(x):=(\log (e+|x|))^{-1}$.
This extends immediately to vector-valued functions. In the
next theorem, we prove that there is no $L^1$ term   in the
context of the $\div$-$\curl$ lemma.

\begin{thm}\label{bijz1}
Let $F\in \H^{1}(\bR^{n},\bR^{n})$ with ${\curl}F=0$ and $G\in
\bmo (\bR^{n},\bR^{n})$ with $\div G=0$. Then  there exists some
constant $C$, independent of $F$ and $G$, such that
\begin{equation}
\left\|F\cdot G\right\|_{\H^{\Phi}_\omega(\bR^{n})}\leq
C\left\|F\right\|_{\H^{1}(\bR^{n},\bR^{n})}\left\|G\right\|_{\bmo(\bR^{n},\bR^{n})}.
\end{equation}
\end{thm}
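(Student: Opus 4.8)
The plan is to combine the potential trick underlying every $\div$-$\curl$ lemma with the atomic machinery of Hardy--Orlicz spaces, adapting the argument of Auscher--Russ--Tchamitchian. Normalize so that $\left\|F\right\|_{\H^1}=\left\|G\right\|_{\bmo}=1$. Since, as recalled above, there is a bounded projection of $\H^1$ onto closed forms, $F$ admits an atomic decomposition into \emph{curl-free} atoms: $F=\sum_k\lambda_k a_k$ with $\sum_k|\lambda_k|\lesssim 1$, where each $a_k=\nabla\phi_k$ with $\phi_k$ Lipschitz, supported in a ball $B_k=B(x_k,r_k)$, $\left\|\nabla\phi_k\right\|_\infty\le |B_k|^{-1}$ and $\left\|\phi_k\right\|_\infty\le r_k|B_k|^{-1}$ (so that $\nabla\phi_k$ is a genuine $\H^1$-atom, which is closed since it is a gradient). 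Because $\div G=0$, one has the identity
\[
a_k\cdot G=\nabla\phi_k\cdot G=\div(\phi_k G)
\]
in the sense of distributions; since $\nabla\phi_k\in L^\infty$ and $G\in L^1_{\mathrm{loc}}$ the left-hand side is an honest integrable function, and a one-line integration by parts shows this agrees with the duality definition of the product. In particular $\div(\phi_k G)$ is supported in $\overline{B_k}$ and, being the divergence of the compactly supported integrable field $\phi_k G$, has zero integral: this vanishing moment is exactly the $\div$-$\curl$ cancellation, performed at the level of a single atom.

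The heart of the matter is then an estimate for $\left\|\div(\phi_k G)\right\|_{\H^\Phi_\omega}$. By John--Nirenberg, $G\in L^q_{\mathrm{loc}}$ for every finite $q$; fixing $q>1$,
\[
\left\|\nabla\phi_k\cdot G\right\|_{L^q(B_k)}\le |B_k|^{-1}\left\|G\right\|_{L^q(B_k)}\lesssim |B_k|^{-1}\Big(\left\|G-G_{B_k}\right\|_{L^q(B_k)}+|G_{B_k}|\,|B_k|^{1/q}\Big)\lesssim |B_k|^{1/q-1}\big(1+\log^+(1/r_k)\big),
\]
where the last step uses John--Nirenberg together with the elementary bound $|G_{B_k}|\lesssim 1+\log^+(1/r_k)$ — a consequence of \emph{both} clauses of \eqref{bmo}, the global clause controlling the average of $G$ over the unit ball $B(x_k,1)$ and the local clause producing the logarithm by chaining from scale $r_k$ up to scale $1$. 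Thus, up to the factor $w_k:=C\big(1+\log^+(1/r_k)\big)$, the function $\div(\phi_k G)$ is a classical $(1,q)$-atom on $B_k$. Invoking the atomic characterization of $\H^\Phi_\omega$ — a series $\sum_k\mu_k b_k$ of $(1,q)$-atoms $b_k$ on balls $B_k$ belongs to $\H^\Phi_\omega$, with controlled norm, as soon as $\sum_k |B_k|\,\Phi\!\big(|\mu_k|/|B_k|\big)\,\overline\omega_{B_k}\lesssim 1$ — one takes $\mu_k=C|\lambda_k|w_k$. Using the inequality $\Phi(ws)\lesssim w\,\Phi(s)$ for $w\ge 1$, and the crucial fact that $\Phi(t)=t/\log(e+t)$ \emph{absorbs} the logarithm $w_k$ precisely at the worst scale $s\sim|B_k|^{-1}$ of a normalized atom, the verification of the summability condition reduces to the ordinary bound $\sum_k|\lambda_k|\lesssim\left\|F\right\|_{\H^1}$. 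This is exactly where the choice \eqref{orl-log} of $\Phi$ enters (and, for the part of the sum living far from the origin, the weight $\omega$, which provides the extra room to absorb the contribution of atoms whose averages of $G$ are only controlled through the global clause of \eqref{bmo}).

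Summing the atomic estimates then yields $\left\|F\cdot G\right\|_{\H^\Phi_\omega}\lesssim\sum_k|\lambda_k|\lesssim \left\|F\right\|_{\H^1}\left\|G\right\|_{\bmo}$ after undoing the normalization, noting along the way that the series $\sum_k\lambda_k\div(\phi_k G)$ converges in $\H^\Phi_\omega$ and represents $F\cdot G$ in $\mathcal S'$ (the latter because $\sum_k\lambda_k a_k\to F$ in $\H^1$ and $f\mapsto\langle\varphi G,f\rangle$ is continuous on $\H^1$). The step I expect to be the main obstacle is the one in the second paragraph: matching the logarithmic loss coming from the unboundedness of $G$ against the gain built into $\Phi(t)=t/\log(e+t)$ at the level of individual atoms, i.e.\ checking that $\{\div(\phi_k G)\}$ really is a legitimate Hardy--Orlicz atomic decomposition with coefficients $\lambda_k w_k$ that are summable in the Orlicz sense — here the div-curl cancellation (which turns $\div(\phi_k G)$ into a mean-zero, compactly supported atom rather than a mere $L^1$ function), John--Nirenberg, and the precise gauge of the $\H^\Phi_\omega$-norm must be balanced against each other. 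Secondary technical points, which I expect to be routine, are the consistency of the pointwise and distributional products of $a_k$ with $G$, the convergence assertions just mentioned, and the availability of a genuinely curl-free atomic decomposition of $F$ (obtained from the bounded projection onto closed forms combined with the standard atomic decomposition of $\H^1$).
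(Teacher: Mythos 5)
Your strategy is genuinely different from the paper's (which never decomposes $F$ into atoms: it proves the pointwise bound $\mathcal M(F\cdot G)\leq C\bigl(\vec{\mathcal M}_{q}F+\mathfrak M^{(1)}(G)\,\vec{\mathcal M}_{\infty}(F)\bigr)$, using the grand maximal function adapted to closed forms for the oscillation part $g-g_{B(x,r)}$, and then applies an Orlicz--H\"older inequality between $L^1$ and the exponential class on a grid of unit balls). Unfortunately the step you yourself flag as the main obstacle is where your argument breaks. The absorption claim --- that $|B_k|\,\Phi\bigl(|\lambda_k|w_k/|B_k|\bigr)\,\overline\omega_{B_k}\leq C|\lambda_k|$ with $w_k=1+\log^+(1/r_k)$, so that the summability criterion reduces to $\sum_k|\lambda_k|\leq C$ --- holds only when $\log\bigl(e+|\lambda_k|w_k/|B_k|\bigr)\geq c\log(1/|B_k|)$, i.e.\ when $|\lambda_k|$ is at least a power of $|B_k|$ times a constant. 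It fails for admissible coefficient sequences: take $N=r^{-n}$ balls of radius $r$ inside the unit ball, each carrying $\lambda_k=1/N=r^{n}$, so that $|\lambda_k|w_k/|B_k|\sim\log(1/r)$; then
\begin{equation*}
\sum_k|B_k|\,\Phi\Bigl(\tfrac{|\lambda_k|w_k}{|B_k|}\Bigr)\,\overline\omega_{B_k}\;\sim\;\frac{\log(1/r)}{\log\log(1/r)}\;\longrightarrow\;\infty,
\qquad\text{while}\qquad \sum_k|\lambda_k|=1 .
\end{equation*}
So the reduction to $\sum_k|\lambda_k|\leq C\left\|F\right\|_{\H^1}$ is false, and the atomic sums cannot be controlled from the per-atom bound alone. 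The loss comes from replacing $|G_{B_k}|$ by its worst case $1+\log^+(1/r_k)$: this discards the John--Nirenberg exponential integrability of $G$, which forbids that bound from being saturated on $r^{-n}$ essentially disjoint balls simultaneously (summing $\int_{B_k}|G|\geq |B_k|\log(1/r)$ over such balls would give $\int_{B(0,2)}|G|\geq \log(1/r)$, contradicting $\left\|G\right\|_{\bmo}=1$). Any repair that retains this information --- e.g.\ bounding $\sum_k|\lambda_k||G_{B_k}|\mathcal M a_k$ by $(1+\mathfrak M^{(1)}G)\sum_k|\lambda_k|\mathcal M a_k$ and then using that the product of an $L^1$ function with a function in the exponential class lies in $L^\Phi_\omega$ --- essentially reconstructs the paper's proof.

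Two secondary points. First, a curl-free atomic decomposition is not obtained by ``combining the bounded projection onto closed forms with the standard atomic decomposition'': the projection of an atom is no longer compactly supported, hence not an atom. Decompositions into exact atoms do exist, but this is a theorem of Lou and McIntosh, not a two-line consequence of boundedness of the projection; the paper's proof avoids needing it. Second, the atomic sufficiency criterion for $\H^{\Phi}_{\omega}$ that you invoke is itself asserted without proof, and in the form stated it is doubtful for this non-convex $\Phi$ and this weight: already the tail of the maximal function of a single mean-zero $(1,q)$-atom contributes a term of order $|\mu_k|=|\lambda_k|w_k$, whose sum is again not controlled by $\sum_k|\lambda_k|$. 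If you wish to keep an atomic route, you must state and prove the precise criterion you use, and it will have to encode more about $G$ than the averages $G_{B_k}$.
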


The theorem is also valid for $G\in \BMO$, but for local Hardy spaces $\h^1$ and $\h^\Phi_\omega$ instead of $\H^1$ and $\H^\Phi_\omega$. We do not know whether it is valid without this restriction on $F$ or $F\cdot G$. There is an $\H^p$ version of this theorem, for $p<1$, which we
give also. Note that $\div$-$\curl$ have been developed in the context of local Hardy spaces by Dafni \cite{D}.

These results can be compared to what can be said on products of
holomorphic functions, for which analogous estimates are elementary and have a weak converse, see  \cite{BG}.

\medskip

To simplify notations, we restricted to vector fields in the
introduction, but we shall write below these results in the context of
the wedge product of closed differential forms. Indeed, recall
that a divergence free vector field can be identified with an
$(n-1)$-closed differential form, while a $\curl$ free vector
field identifies with a $1$- closed form, their scalar product being given by the wedge product of these two forms.
The usual $\div$-$\curl$
lemma has been extended to wedge products of closed differential
forms by Lou and Mc Intosh \cite{LM1, LM2} when $\frac 1p+\frac
1q=1$, with both $p$ and $q$ finite. We will do it in general.

\medskip

Our paper is organized as follows. We recall basic results about
classical Hardy spaces in the second section. We define an
appropriate grand maximal function to characterize
$\H^{p}(\bR^{n})$, which has been introduced in \cite{ART}. In
Section 3, after recalling  some  basic facts about differential
forms, we give the analogous of the previous grand maximal
function characterization in this context. In Section 4 we give the whole range of
the $\div$-$\curl$ Lemma   for closed forms. Section 5 is devoted
to assumptions of type $\BMO$.

\medskip

Throughout this paper, $C$ denotes constants that are independent
of the functions involved, with values which may differ from line
to line. For two quantities $A$ and $B$, the notation $A\sim B$
means that there exist two positive  constants $C_{1}$ and $C_{2}$
such that $C_{1}A\leq B\leq C_{2}A$. If $E$ is a measurable subset
of $\bR^{n}$, then $\left|E\right|$ stands for its Lebesgue
measure.

\section{Some basic facts about classical Hardy spaces}

We fix $\varphi\in\S(\bR^{n})$  having integral $1$ and support in the unit
 ball $\mathbb B=\left\{x\in\bR^{n}:|x|<1\right\}$. For
  $f\in\mathcal S' (\bR^{n})$ and $x\in\bR^{n}$, we put
\begin{equation}
 \left(f\ast\varphi\right)(x):=\left\langle f,\varphi(x-\cdot)\right\rangle,
\end{equation}
 and define the maximal function $\mathcal M f=\mathcal M_{\varphi}f$ by
\begin{equation}\label{maximal}
\mathcal M f(x):=\sup_{t>0}\left|\left(f\ast\varphi_{t}\right)(x)\right|,
\end{equation}
where $\varphi_{t}(x)=t^{-n}\varphi\left(t^{-1}x\right)$.

For $p>0$,  a tempered distribution $f$ is said to belong to the
Hardy space $\H^{p}(\bR^{n})$ if
\begin{equation}\label{def}
\|f\|_{\H^{p}(\bR^{n})}:=\left
(\int_{\bR^n}\mathcal M_{\varphi}f(x)^{p}dx\right)^{\frac 1p}=\left\|\mathcal M_{\varphi}f\right\|_{L^{p}}
\end{equation}
 is finite.
It is well known that, up to equivalence of corresponding norms,
the space $\H^p(\bR^n)$  does not depend on the choice of the
function $\varphi$. So, in the sequel, we shall use the notation
$\M f$ instead of $\M_{\varphi}f$.

For $\frac{n}{n+1}<p\leq 1$, an $\H^{p}$-atom (related to the ball
$B$) is a bounded function $a$ supported in  $B$ and satisfying
the following conditions
\begin{equation}
\left\|a\right\|_{L^{\infty}}\leq\left|B\right|^{-\frac{1}{p}}\text{ and }\int_{\bR^{n}}a(x)dx=0.
\end{equation}
The atomic decomposition of $\H^{p}$ states that a temperate distribution $f$ belongs to $\H^{p}$
 if and only if there exist a sequence $(a_{j})$ of $\H^{p}$-atoms and a sequence $(\lambda_{j})$
  of scalars such that
\begin{equation}\label{atomic}
f=\sum^{\infty}_{j=1}\lambda_{j}a_{j}\quad\text{ and }
\quad\sum^{\infty}_{j=1}\left|\lambda_{j}\right|^{p}<\infty,
\end{equation}
where the first sum is assumed to converge in the sense of distributions. Moreover, $f$ is
 the limit of the partial sums in $\H^{p}$, and $\left\|f\right\|_{\H^{p}}$ is equivalent
 to the infimum, taken over all such decomposition of $f$, of the quantities
 $\left(\sum^{\infty}_{j=1}\left|\lambda_{j}\right|^{p}\right)^{\frac{1}{p}}$.

 We refer to \cite{St} for  background on Hardy spaces.

 For the purpose of our main results, we are going to define an appropriate
  grand maximal function, which induces on $\H^{p}$ a semi-norm equivalent to the previous one.

Let $q>n$. For $x\in\bR^{n}$, we denote by $\F^{q}_{x}$, the set
of all $\psi\in W^{1,q}(\bR^{n})$ supported in some ball $B(x,r)$
centered at $x$ with radius $r>0$ which satisfy
\begin{equation}
 \left\|\psi\right\|_{L^{q}(\bR^{n})}+r\left\|\nabla\psi\right\|_{L^{q}(\bR^{n})}
 \leq|B_{(x,r)}|^{-\frac{1}{q'}},
 \end{equation}
 where $\frac{1}{q}+\frac{1}{q'}=1$. Here $W^{1,q}(\bR^{n})$
 denotes the Sobolev space of functions in $L^q$ with derivatives
 in $L^q$. Since $q>n$, the Sobolev theorem guarantees that the
 test functions are bounded, which allows to give the following
 definition.

 For $f\in L^{1}_{loc}(\bR^{n})$, and $x\in\bR^{n}$, put
\begin{equation}
\mathcal M_{q} f(x):=\sup_{\psi\in
\F^{q}_{x}}|\int_{\bR^{n}}f\psi|.\label{grand-maximal}
\end{equation}

 The following lemma is classical, but we give its  proof for completeness.

 \begin{lem}\label{classical-result1}
 Let $f$ be a locally integrable function on $\bR^{n}$.
\begin{enumerate}
    \item [(i)] There exists a constant $C$ not depending on $f$, such that
    \begin{equation}
\mathcal Mf\leq C\mathcal M_{\infty}f,\label{point-wise1}
\end{equation}
\item [(ii)] For $\frac{n}{n+1}<p\leq1$ and $\frac 1q <\frac
{n+1}n-\frac 1p$
\begin{equation}
  \left\|\mathcal M_{q} f\right\|_{L^{p}(\bR^{n})}\sim \left\|f\right\|_{\H^{p}(\bR^{n})}.\label{norm-equi1}
\end{equation}
\end{enumerate}
\end{lem}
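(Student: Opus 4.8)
The plan is to derive (i) by recognizing the functions $y\mapsto\varphi_t(x-y)$ as admissible test functions in $\F^\infty_x$ up to a fixed constant, and to prove the equivalence (ii) in two steps: an elementary pointwise comparison of maximal functions for the inequality $\|f\|_{\H^p}\le C\|\mathcal M_q f\|_{L^p}$, and the atomic decomposition \eqref{atomic} for the reverse inequality. For (i), fix $x$ and $t>0$ and set $\psi_t(y):=\varphi_t(x-y)$; then $\psi_t$ is smooth, supported in $B(x,t)$, and $\|\psi_t\|_{L^\infty}+t\|\nabla\psi_t\|_{L^\infty}=t^{-n}(\|\varphi\|_{L^\infty}+\|\nabla\varphi\|_{L^\infty})=C_0|B(x,t)|^{-1}$ with $C_0:=|B(0,1)|(\|\varphi\|_{L^\infty}+\|\nabla\varphi\|_{L^\infty})$, so $C_0^{-1}\psi_t\in\F^\infty_x$. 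Since $f\in L^1_{loc}$, $(f\ast\varphi_t)(x)=\int_{\bR^n}f\psi_t$, whence $|(f\ast\varphi_t)(x)|\le C_0\,\mathcal M_\infty f(x)$; taking the supremum over $t$ gives \eqref{point-wise1}.

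For the first inequality in (ii): if $\psi\in\F^\infty_x$ is supported in $B(x,r)$, Hölder's inequality gives $\|\psi\|_{L^q}\le|B(x,r)|^{1/q}\|\psi\|_{L^\infty}\le|B(x,r)|^{-1/q'}$ and likewise $r\|\nabla\psi\|_{L^q}\le|B(x,r)|^{-1/q'}$, so $\tfrac12\psi\in\F^q_x$; hence $\mathcal M_\infty f\le2\,\mathcal M_q f$ pointwise, and combining this with (i) and $\|\mathcal M f\|_{L^p}=\|f\|_{\H^p}$ yields $\|f\|_{\H^p}\le C\|\mathcal M_q f\|_{L^p}$ for every $q>n$ (the restriction on $q$ is not needed here). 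For the reverse inequality I would use that $\mathcal M_q$ is subadditive and that $p\le1$: given $f\in\H^p$ with atomic decomposition $f=\sum_j\lambda_j a_j$, $\sum_j|\lambda_j|^p\le C\|f\|_{\H^p}^p$, one has $(\mathcal M_q f)^p\le\bigl(\sum_j|\lambda_j|\mathcal M_q a_j\bigr)^p\le\sum_j|\lambda_j|^p(\mathcal M_q a_j)^p$, so everything reduces to a uniform bound $\|\mathcal M_q a\|_{L^p}\le C$ for a single $\H^p$-atom $a$, say supported in $B=B(x_0,\rho)$ with $\|a\|_{L^\infty}\le|B|^{-1/p}$ and $\int a=0$.

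To bound $\mathcal M_q a$, I would split $\bR^n=2B\cup(2B)^c$. On $2B$: for $\psi\in\F^q_x$ supported in $B(x,r)$, Hölder and $|B\cap B(x,r)|\le|B(x,r)|$ give $|\int a\psi|=|\int_{B\cap B(x,r)}a\psi|\le\|a\|_{L^\infty}\|\psi\|_{L^q}|B(x,r)|^{1/q'}\le\|a\|_{L^\infty}$, so $\mathcal M_q a\le|B|^{-1/p}$ everywhere and $\int_{2B}(\mathcal M_q a)^p\le2^n$. On $(2B)^c$: if $\int a\psi\neq0$ then $B(x,r)$ meets $B$, forcing $r\ge|x-x_0|-\rho\ge\tfrac12|x-x_0|$; using $\int a=0$ to replace $\psi(y)$ by $\psi(y)-\psi(x_0)$, the scale-invariant Sobolev (Morrey) estimate $|\psi(y)-\psi(x_0)|\le C\|\nabla\psi\|_{L^q}|y-x_0|^{1-n/q}$ valid for $q>n$, and the bounds $\|a\|_{L^1}\le|B|^{1-1/p}$, $\|\nabla\psi\|_{L^q}\le r^{-1}|B(x,r)|^{-1/q'}$, I get
\[
\mathcal M_q a(x)\le C\,\rho^{n+1-\frac np-\frac nq}\,|x-x_0|^{-\left(n+1-\frac nq\right)}\qquad(x\notin2B).
\]
Raising to the power $p$ and integrating over $|x-x_0|\ge2\rho$, the integral of $|x-x_0|^{-p(n+1-n/q)}$ is finite precisely when $p(n+1-\frac nq)>n$, equivalently $\frac1q<\frac{n+1}n-\frac1p$, which is the hypothesis; a dilation then shows $\int_{(2B)^c}(\mathcal M_q a)^p\le C$ uniformly in $a$. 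Summing over $j$ proves \eqref{norm-equi1}.

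The main obstacle is precisely this last off-diagonal estimate for an atom: one must combine the vanishing moment with the Sobolev/Morrey control of the modulus of continuity of $\psi$ and keep exact track of the powers of $\rho$ and of $|x-x_0|$, so that the threshold for integrability of $(\mathcal M_q a)^p$ at infinity coincides with the assumed range $\frac1q<\frac{n+1}n-\frac1p$. Everything else is soft: subadditivity of $\mathcal M_q$, the inequality $(\sum t_j)^p\le\sum t_j^p$ for $p\le1$, and the comparison of the test-function classes $\F^\infty_x$ and $\F^q_x$. A minor technical point is the justification of interchanging $\mathcal M_q$ with the series $\sum_j\lambda_j a_j$, which follows from the convergence of the partial sums in $L^1_{loc}$ (and in $\mathcal S'$).
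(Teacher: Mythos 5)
Your proposal is correct and follows essentially the same route as the paper: part (i) by recognizing $c\varphi_t(x-\cdot)$ as an element of $\F^\infty_x$, the easy half of (ii) by the inclusion of test classes $\F^\infty_x\subset C\,\F^q_x$ via H\"older, and the hard half by reducing to a uniform bound on atoms and exploiting the vanishing moment to get decay $|x-x_0|^{-(n+1-n/q)}$, whose $p$-th power is integrable exactly under the stated condition on $q$. The only (immaterial) difference is that you invoke the Morrey modulus-of-continuity estimate where the paper uses the $L^1$--$L^q$ Poincar\'e inequality on the unit ball; both yield the same exponent.
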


\begin{proof}
Let $f\in L^{1}_{loc}(\bR^{n})$.
 To prove (i), it is sufficient to see that, for
 $\varphi$  the test function used in the definition of Hardy space, there exists some constant $c$ such
 that,
 for all $x\in\bR^{n}$ and $t>0$, the function $\varphi_{x,t}(y):=c\varphi_{t}(x-y)$
belongs to $\F^{\infty}_{x}$.  One can choose
$c=\left(\left\|\varphi\right\|_{L^{\infty}(\bR^{n})}+
\left\|\nabla\varphi\right\|_{L^{\infty}}\right)^{-1}$.

Let us now prove (ii). It is sufficient to consider $q<\infty$ and
the inequality
\begin{equation}
 \left\|\mathcal M_{q} f\right\|_{L^{p}}\leq C \left\|f\right\|_{\H^{p}},\label{norm-control1}
 \end{equation} since
 \begin{equation}
 \mathcal M f\leq C\mathcal M_{\infty}f\leq C\mathcal M_{q}f.
 \end{equation}
 By sub-linearity of the maximal operator $\M_q$, it is sufficient
 to prove a uniform estimate for atoms,
 \begin{equation}
 \left\|\mathcal M_{q} a\right\|_{L^{p}}\leq C \label{norm-control2}
 \end{equation}
 for some uniform constant $C$. Indeed, once we have this, we conclude for $f=\sum \lambda_j a_j$
 that
$$\left\|\mathcal M_{q} f\right\|_{L^{p}}\leq \left(\sum |\lambda_j|^p\|\mathcal M_{q}
a\|_{L^{p}}^p\right)^{1/p}\leq C
\left(\sum|\lambda_j|^p\right)^{1/p}.$$

So let us prove \eqref{norm-control2}. Without loss of generality,
using invariance by translation and dilation, we may assume that
$a$ is a  function with zero mean, supported by the unit ball $\mathbb B$
centered at $0$, and bounded by $1$.   We prove that there
exists $\varepsilon>0$ depending on $q$ such that
$$|\mathcal M_{q} a(x)|\leq C(1+|x|)^{-n-1+\varepsilon}.$$
By assumption
on $\psi\in\F^q_x$, using H\"older's Inequality, we find that
$\|\psi\|_1\leq 1$. So $\mathcal M_{q} a$ is bounded by $1$, and
it is sufficient to prove that, for $\psi\in\F^q_x$,
$$|\int_{\mathbb{B}}\psi a| \leq C|x|^{-n-1+\varepsilon}$$ for $|x|\geq
2$. Moreover, in this range of $x$, we may restrict to functions
$\psi$ supported in $B(x,r)$ with $r>|x|/2$, so that $\|\nabla
\psi\|_q\leq C|x|^{-\frac n {q'}-1}$.

Since $a$ has mean zero, $$|\int_{\mathbb{B}}\psi
a|=|\int_{\mathbb{B}}(\psi-\psi_{\mathbb{B}}) a|\leq C\|\nabla
\psi\|_q.$$ We have used Poincar\'e Inequality for the last inequality.  The condition on
$q$ is required for $|x|^{-p(\frac n{q'}+1)}$ to be integrable at
infinity.
\end{proof}

This discussion extends to local Hardy spaces, that we define now.
We first
define the truncated version of the maximal function, namely
\begin{equation}\label{maximal-tr}
\M_{\varphi}^{(1)}f(x):=\sup_{0<t<1}\left|\left(f\ast\varphi_{t}\right)(x)\right|.
\end{equation}
A tempered distribution $f$
is said to belong to the space $\h^{p}(\bR^{n})$ if
\begin{equation}\label{def-h}
\|f\|_{\h^{p}(\bR^{n})}:=\left
(\int_{\bR^n}\M_{\varphi}^{(1)}f(x)^{p}dx\right)^{\frac 1p}<\infty.
\end{equation}
The atomic decomposition holds for local Hardy spaces, with only atoms associated to balls of radius less than $1$ satisfying the moment condition, see \cite{G}. The previous lemma is valid in the context of $\h^{p}(\bR^{n})$, with $\M^{(1)}$ in place of $\M$.

\section{Hardy spaces of differential forms}\label{difform}
Let us first fix notations and recall standard properties. Let
$\bR^{n}$ be the Euclidean space equipped with its standard
orthonormal basis $\mathcal B=\left\{e_{1},\ldots,e_{n}\right\}$,
and let $\mathcal B^{\ast}=\left\{e^{1},\ldots,e^{n}\right\}$ be
its dual basis.

For  $\ell\in\left\{1,\ldots,n\right\}$, denote by
$\Lambda^{\ell}$ the space  of  $\ell$-linear alternating forms,
 which consists in linear combinations of exterior
products
\begin{equation}
e^{I}=e^{i_{1}}\wedge\ldots\wedge e^{i_{\ell}},
\end{equation}
where $I=(i_{1},\ldots,i_{\ell})$ is any $\ell$-tuple. The standard basis of $\Lambda^{\ell}$ is $\left\{e^{I}\right\}$ where $I$ is an ordered $\ell$-tuple, $1\leq i_{1}<\ldots<i_{\ell}\leq n$. For $\alpha=\sum_{I}\alpha_{I}e^{I}$ and $\beta=\sum_{I}\beta_{I}e^{I}$ in $\Lambda^{\ell}$, we define the inner product of $\alpha$ and $\beta$ as follows
    \begin{equation}
    \left\langle \alpha,\beta\right\rangle:=\sum\alpha_{I}\beta_{I},
    \end{equation}
    where the summation is taken over all ordered $\ell$-tuples.

    The Hodge operator is the linear operator  $\ast:\Lambda^{\ell}\rightarrow\Lambda^{n-\ell}$ defined by
    \begin{equation}
    \alpha\wedge\ast\beta=\left\langle \alpha,\beta\right\rangle e^{1}\wedge\ldots\wedge e^{n}
    \end{equation}
    for all $\alpha,\beta\in\Lambda^{\ell}$.

    An $\ell$-form on $\bR^{n}$ is defined as a function $u:\bR^{n}\rightarrow\Lambda^{\ell}$
    which may be written as
    \begin{equation}
    u=\sum_{I}u_{I}e^{I},
    \end{equation}
    where the $u_{I}$'s are (real-valued) functions on $\bR^{n}$ and
    all the $I$'s are of length $\ell$.
\begin{defn}
Let $\Omega\subset\bR^{n}$ be an open set, $\ell$ a positive integer as above and
     $\mathcal E(\Omega)$ a normed space of functions $f:\Omega\rightarrow\bR$ equipped with
     the norm $\left\|f\right\|_{\mathcal E(\Omega)}$. We say that an $\ell$-form
     $\omega=\sum_{I}\omega_{I}e^{I}$ belongs to $\mathcal E(\Omega,\Lambda^{\ell})$
     if $\omega_{I}\in\mathcal E(\Omega)$ for all ordered $\ell$-tuples $I$, and we
     pose
\begin{equation}
\left\|\omega\right\|_{\mathcal E(\Omega,\Lambda^{\ell})}:=\sum_{I}\left\|\omega_{I}\right\|_{\mathcal E(\Omega)}.
\end{equation}
\end{defn}
    Let $d:\mathcal D'(\Omega,\Lambda^{\ell-1})\rightarrow\mathcal D'(\Omega,\Lambda^{\ell})$
     denote the exterior derivative operator given by
    \begin{equation}
    d\omega=\sum_{k,I}\partial_{k}\omega_{I}e^{k}\wedge e^{I}
    \end{equation}
where $\partial_{k}\omega_{I}$ is the partial derivative with
respect to the $k$-th variable. The Hodge operator
$\delta:\Lambda^{\ell}\rightarrow\Lambda^{\ell-1}$ defined by
$\delta=(-1)^{n(n-\ell)}\ast d \ast$ is the formal adjoint of $d$
in the sense that if $\alpha\in\mathcal
C^{\infty}(\Omega,\Lambda^{\ell})$ and $\beta\in\mathcal
C^{\infty}(\Omega,\Lambda^{\ell+1})$, then
    \begin{equation}
    \int_{\Omega}\left\langle \alpha,\delta\beta\right\rangle =-\int_{\Omega}\left\langle d\alpha,\beta\right\rangle ,
    \end{equation}
    provided that one of these forms has compact support. We also define the Laplacian
    \begin{equation}
    \Delta_\ell=d\delta+\delta d:\mathcal D'(\bR^{n},\Lambda^{\ell})\rightarrow\mathcal D'(\bR^{n},\Lambda^{\ell})
    \end{equation}
    and a simple calculation shows that for $\omega=\sum_{I}\omega_{I}e^{I}\in W^{2,p}(\bR^{n},\Lambda^{\ell})$ with $1\leq p\leq\infty$,
    \begin{equation}
    \Delta_\ell\omega=\sum_{I}\Delta\omega_{I}e^{I},
    \end{equation}
    where  $\Delta\omega_{I}$ is the usual Laplacian on functions.

For $f=\sum_{I}f_{I}e^{I}\in \mathcal D'(\bR^{n},\Lambda^{\ell})$, we put
 \begin{equation}
 \partial_{j}f:=\sum_{I}\partial_{j}f_{I}e^{I}.
 \end{equation}

\begin{defn}
Let $\ell\in\left\{1,\ldots,n-1\right\}$, and $\frac{n}{n+1}<p\leq
1$. The Hardy space of closed $\ell$-forms is defined as
\begin{equation}
\H^{p}_{d}\left(\bR^{n},\Lambda^{\ell}\right):=\left\{f\in\H^{p}\left(\bR^{n},\Lambda^{\ell}\right):
\quad df=0 \right\}
\end{equation}
endowed with the norm of $\H^{p}(\bR^{n},\Lambda^{\ell})$.
\end{defn}

Recall that all closed $\ell$-forms are exact, that is, there
exists  some $ g\in\mathcal D'(\bR^{n},\,\Lambda^{\ell-1})$ such
that $f=dg$.

We will need the analogue of  the previous scalar
characterizations of Hardy spaces.

For $1\leq q\leq\infty$, we first define, for  $ f\in
L^{1}_{loc}(\bR^{n},\,\Lambda^{\ell})$, the grand maximal function
$\vec{\mathcal M}_{q} f$ as follows.
\begin{equation}
\vec{\mathcal M}_{q}
f(x):=\sup_{\Phi\in\vec{\F}^{q}_{x}}|\int_{\bR^{n}}
f\wedge\Phi|,\label{vect-max}
\end{equation}
where $\vec{\F}^{q}_{x}$ denote the set of all $\Phi\in
W^{1,q}(\bR^{n},\,\Lambda^{n-\ell})$ for which there exists $r>0$
such that $\Phi$ is supported in the ball $B_{(x,r)}$ and satisfies
 $$(*)\quad\left\|\Phi\right\|_{L^{q}(\bR^{n},\,\Lambda^{n-\ell})}
    +r\left\|\nabla\Phi\right\|_{L^{q}(\bR^{n},\,\Lambda^{n-\ell})}\leq\left|B_{(x,r)}\right|^{-\frac{1}{q'}}.$$

The next lemma is a direct consequence of Lemma
\ref{classical-result1} and the fact that for $
f=\sum_{I}f_{I}e^{I}\in\H^{p}(\bR^{n},\Lambda^{\ell})$ and  any
positive integer $k$,
\begin{equation}
    \vec{\mathcal M}_{q} f\leq\sum_{I}\mathcal M_{q} f_{I}.
\end{equation}

\begin{lem}\label{control}
Let  $\frac{n}{n+1}<p\leq 1$ and $\frac 1q <\frac {n+1}n-\frac
1p$. There exists a constant $C$ such that, for all $f\in
L^{1}_{loc}(\bR^{n},\Lambda^{\ell})$,
\begin{equation}
\left\|\vec{\mathcal M}_{q} f\right\|_{L^{p}(\bR^{n})}\leq C
\left\|f\right\|_{\H^{p}(\bR^{n},\Lambda^{\ell})}\label{norm-control2a}.
\end{equation}
\end{lem}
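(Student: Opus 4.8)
The plan is to reduce everything to the scalar estimate of Lemma~\ref{classical-result1}(ii), applied componentwise. Write $f=\sum_{I}f_{I}e^{I}$, the sum being over the $N:=\binom{n}{\ell}$ ordered $\ell$-tuples $I$. The first step is the pointwise bound
\[
\vec{\mathcal M}_{q} f(x)\leq\sum_{I}\mathcal M_{q} f_{I}(x),\qquad x\in\bR^{n},
\]
which is the fact quoted just before the statement. To see it, fix $x$ and $\Phi\in\vec{\F}^{q}_{x}$ supported in a ball $B_{(x,r)}$, and expand $\Phi=\sum_{J}\Phi_{J}e^{J}$ over ordered $(n-\ell)$-tuples $J$. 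In $f\wedge\Phi$ only the products $e^{I}\wedge e^{I^{c}}=\varepsilon_{I}\,e^{1}\wedge\cdots\wedge e^{n}$ contribute, where $I^{c}$ denotes the ordered complement of $I$ and $\varepsilon_{I}\in\{-1,1\}$; identifying $\Lambda^{n}$ with $\bR$ through the volume form gives $\int_{\bR^{n}}f\wedge\Phi=\sum_{I}\varepsilon_{I}\int_{\bR^{n}}f_{I}\,\Phi_{I^{c}}$. Each $\varepsilon_{I}\Phi_{I^{c}}$ is a scalar function supported in $B_{(x,r)}$, and since the norm on $W^{1,q}(\bR^{n},\Lambda^{n-\ell})$ is the sum of the norms of the components, $\|\varepsilon_{I}\Phi_{I^{c}}\|_{L^{q}}+r\|\nabla(\varepsilon_{I}\Phi_{I^{c}})\|_{L^{q}}\leq\|\Phi\|_{L^{q}(\bR^{n},\Lambda^{n-\ell})}+r\|\nabla\Phi\|_{L^{q}(\bR^{n},\Lambda^{n-\ell})}\leq|B_{(x,r)}|^{-1/q'}$, so $\varepsilon_{I}\Phi_{I^{c}}\in\F^{q}_{x}$. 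Hence $\big|\int f_{I}\Phi_{I^{c}}\big|\leq\mathcal M_{q}f_{I}(x)$ for every $I$, and taking the supremum over $\Phi\in\vec{\F}^{q}_{x}$ yields the claim.

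Granting this, I would finish as follows. Since $0<p\leq1$ the map $t\mapsto t^{p}$ is subadditive on $[0,\infty)$, so
\[
\|\vec{\mathcal M}_{q} f\|_{L^{p}(\bR^{n})}^{p}\leq\int_{\bR^{n}}\Big(\sum_{I}\mathcal M_{q}f_{I}\Big)^{p}dx\leq\sum_{I}\int_{\bR^{n}}(\mathcal M_{q}f_{I})^{p}dx=\sum_{I}\|\mathcal M_{q}f_{I}\|_{L^{p}(\bR^{n})}^{p}.
\]
Each component $f_{I}$ lies in $\H^{p}(\bR^{n})$, and the hypothesis $\frac1q<\frac{n+1}{n}-\frac1p$ is exactly the one needed to apply Lemma~\ref{classical-result1}(ii), which gives $\|\mathcal M_{q}f_{I}\|_{L^{p}(\bR^{n})}\leq C\|f_{I}\|_{\H^{p}(\bR^{n})}$ with $C$ independent of $f$. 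Since $\|f_{I}\|_{\H^{p}(\bR^{n})}\leq\|f\|_{\H^{p}(\bR^{n},\Lambda^{\ell})}$ for each of the $N$ tuples $I$, we obtain $\|\vec{\mathcal M}_{q} f\|_{L^{p}(\bR^{n})}^{p}\leq C^{p}N\,\|f\|_{\H^{p}(\bR^{n},\Lambda^{\ell})}^{p}$, and taking $p$-th roots proves \eqref{norm-control2a} with constant $CN^{1/p}$.

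No step here is a genuine obstacle: the analytic content is already contained in the scalar Lemma~\ref{classical-result1}, and the only point demanding a moment of attention is the verification that a single coefficient $\Phi_{I^{c}}$ of an admissible test form $\Phi$ is itself an admissible scalar test function — the support condition and the normalization $(*)$ both pass to components precisely because the $W^{1,q}(\bR^{n},\Lambda^{n-\ell})$-norm is defined as the sum over components. The same argument, replacing $\M$ and $\H^{p}$ by their truncated/local counterparts $\M^{(1)}$ and $\h^{p}$, proves the analogous statement for local Hardy spaces.
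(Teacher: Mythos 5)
Your proof is correct and follows exactly the route the paper indicates: it states that Lemma~\ref{control} is a direct consequence of Lemma~\ref{classical-result1} together with the pointwise bound $\vec{\mathcal M}_{q} f\leq\sum_{I}\mathcal M_{q} f_{I}$, which is precisely what you establish and then combine with the $p$-subadditivity of $t\mapsto t^{p}$. You have simply written out the componentwise verification (that each coefficient $\varepsilon_{I}\Phi_{I^{c}}$ of an admissible form lies in $\F^{q}_{x}$) that the paper leaves implicit.
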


We need a weaker  version of this grand maximal function,
denoted by $\vec{\mathcal M}_{q,d} f$, which is adapted to
Hardy spaces of closed forms. We define
\begin{equation}
\vec{\mathcal M}_{q,d} f(x):=\sup_{\Phi\in
\vec{\F}^{q}_{x,d}}|\int_{\bR^{n}} f\wedge\Phi|,
\end{equation}
where $\vec{\F}^{q}_{x,d}$ denote the set of $ \Phi\in
L^\infty(\bR^{n},\Lambda^{n-\ell})$ supported in some ball $B(x,r)$ satisfying
$$(**)\quad\left\|\Phi\right\|_{L^{q}(\bR^{n},\Lambda^{n-\ell})}
+r\left\|d\Phi\right\|_{L^{q}(\bR^{n},\Lambda^{n-\ell+1})}\leq\left|B_{(x,r)}\right|^{-\frac{1}{q'}}.$$

 \begin{lem}\label{maximal-d}
 Let $q>n$ and $1\leq \ell \leq n-1$. For all $f\in L^{1}_{loc} (\bR^{n},\Lambda^{\ell})$, the following inequality holds
 \begin{equation}
 \vec{\mathcal M}_{q}f\leq\vec{\mathcal M}_{q,d}f.\label{pointwise2a}
 \end{equation}
 Moreover, if $f$ is a closed form, then
  \begin{equation}
\vec{\mathcal M}_{q,d}f \leq C\vec{\mathcal M}_{q}f.
\label{pointwise2b}
 \end{equation}
for some uniform constant $C$.
 \end{lem}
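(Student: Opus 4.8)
The plan is to prove \eqref{pointwise2a} by the (almost tautological) inclusion of test classes, and \eqref{pointwise2b} by localizing a Hodge decomposition of the test form and using that $f$ is closed to discard the exact part.

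For \eqref{pointwise2a}: if $\Phi\in\vec{\F}^{q}_{x}$ then, since $q>n$, the Sobolev embedding $W^{1,q}\subset L^{\infty}$ gives $\Phi\in L^{\infty}$; moreover $d\Phi=\sum_{k,I}\partial_{k}\Phi_{I}\,e^{k}\wedge e^{I}$ is componentwise a signed sum of the $\partial_{k}\Phi_{I}$, so $|d\Phi|\lesssim|\nabla\Phi|$ pointwise and hence $\|\Phi\|_{L^{q}}+r\|d\Phi\|_{L^{q}}\lesssim\|\Phi\|_{L^{q}}+r\|\nabla\Phi\|_{L^{q}}\le|B_{(x,r)}|^{-1/q'}$. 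Thus a fixed multiple of $\Phi$ lies in $\vec{\F}^{q}_{x,d}$, and testing against $f$ yields $\vec{\mathcal M}_{q}f\lesssim\vec{\mathcal M}_{q,d}f$.

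For \eqref{pointwise2b} assume $f$ closed and $n<q<\infty$, and fix $\Phi\in\vec{\F}^{q}_{x,d}$, supported in $B:=B_{(x,r)}$ with $\|\Phi\|_{L^{q}}+r\|d\Phi\|_{L^{q}}\le|B|^{-1/q'}$. It suffices to build a form $\tilde\Phi$ supported in $B_{(x,2r)}$, with $\|\tilde\Phi\|_{L^{q}}+2r\|\nabla\tilde\Phi\|_{L^{q}}\lesssim|B|^{-1/q'}$ (so that a dimensional multiple of $\tilde\Phi$ lies in $\vec{\F}^{q}_{x}$), and such that $\int_{\bR^{n}}f\wedge\Phi=\int_{\bR^{n}}f\wedge\tilde\Phi$; taking the supremum over $\Phi$ then gives $\vec{\mathcal M}_{q,d}f(x)\lesssim\vec{\mathcal M}_{q}f(x)$. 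To construct $\tilde\Phi$, let $\Delta^{-1}$ be convolution with the fundamental solution of the Laplacian, applied componentwise (well defined on compactly supported $L^{q}$ forms, commuting with $d$ and $\delta$), and set $\eta:=\delta\Delta^{-1}\Phi$, $R\Phi:=\delta\Delta^{-1}(d\Phi)$, so that $\Phi=\Delta\Delta^{-1}\Phi=(d\delta+\delta d)\Delta^{-1}\Phi=d\eta+R\Phi$ on $\bR^{n}$. The gradients $\nabla(R\Phi)$ and $\nabla\eta$ are Calder\'on--Zygmund transforms of $d\Phi$ and of $\Phi$, whence $\|\nabla(R\Phi)\|_{L^{q}(\bR^{n})}\lesssim\|d\Phi\|_{L^{q}}$ and $\|\nabla\eta\|_{L^{q}(\bR^{n})}\lesssim\|\Phi\|_{L^{q}}$; and since $d\Phi,\Phi$ are supported in $B$ while the kernels of $\delta\Delta^{-1}$ are $O(|z|^{1-n})$, hence in $L^{1}(\{|z|<3r\})$ with norm $\lesssim r$, Young's inequality gives $\|R\Phi\|_{L^{q}(2B)}\lesssim r\|d\Phi\|_{L^{q}}$ and $\|\eta\|_{L^{q}(2B)}\lesssim r\|\Phi\|_{L^{q}}$. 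Now pick a cut-off $\chi$ equal to $1$ on $B$, supported in $2B$, with $|\nabla\chi|\lesssim r^{-1}$, $|\nabla^{2}\chi|\lesssim r^{-2}$, and set $\tilde\Phi:=\chi\,R\Phi-d\chi\wedge\eta$. From $\Phi=\chi\Phi=\chi\,d\eta+\chi\,R\Phi$ and $\chi\,d\eta=d(\chi\eta)-d\chi\wedge\eta$ we get $\Phi=d(\chi\eta)+\tilde\Phi$, so $\tilde\Phi$ is supported in $2B$ and $\Phi-\tilde\Phi=d(\chi\eta)$; the Leibniz rule and the bounds above give $\|\tilde\Phi\|_{L^{q}}\lesssim r\|d\Phi\|_{L^{q}}+\|\Phi\|_{L^{q}}\lesssim|B|^{-1/q'}$ and $\|\nabla\tilde\Phi\|_{L^{q}}\lesssim\|d\Phi\|_{L^{q}}+r^{-1}\|\Phi\|_{L^{q}}\lesssim r^{-1}|B|^{-1/q'}$, as required.

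Finally, $\int f\wedge\Phi=\int f\wedge\tilde\Phi$: both integrals make sense since $\Phi\in L^{\infty}$ by hypothesis and $\tilde\Phi\in L^{\infty}$ by Sobolev ($q>n$), both compactly supported, and $\Phi-\tilde\Phi=d(\chi\eta)$ with $\chi\eta$ a compactly supported $W^{1,q}$ form. For smooth closed $g$, Stokes gives $\int g\wedge(\Phi-\tilde\Phi)=\int g\wedge d(\chi\eta)=\pm\int d(g\wedge\chi\eta)=0$; applying this to $g=f\ast\rho_{\varepsilon}$ (still closed) and letting $\varepsilon\to0$ — legitimate because $f\ast\rho_{\varepsilon}\to f$ in $L^{1}_{loc}$ and $\Phi,\tilde\Phi\in L^{\infty}$ have compact support — yields the claim, and \eqref{pointwise2b} follows. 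The main obstacle is exactly the construction of $\tilde\Phi$ with the correct $r$-scaling of the constants; the delicate point is the zeroth order estimate $\|R\Phi\|_{L^{q}(2B)}\lesssim r\|d\Phi\|_{L^{q}}$, which cannot come from a Sobolev inequality (that would force $q<n$) and must instead exploit the compact support of $d\Phi$ via Young's inequality. Alternatively one may bypass the explicit Hodge computation by invoking a Bogovski\u\i-type solution operator for $d$ on $B$, producing directly a compactly supported primitive of $d\Phi$ with the needed $W^{1,q}$ bounds.
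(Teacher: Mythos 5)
Your proof is correct, and your part \eqref{pointwise2a} coincides with the paper's: the pointwise bound $|d\Phi|\le C|\nabla\Phi|$ gives the inclusion $\vec{\F}^{q}_{x}\subset\vec{\F}^{q}_{x,d}$ of test classes. For \eqref{pointwise2b} you reach the same intermediate goal as the paper --- replace the test form $\Phi\in\vec{\F}^{q}_{x,d}$ by a form lying in a fixed multiple of $\vec{\F}^{q}_{x}$ and differing from $\Phi$ by an exact form with compactly supported Sobolev potential, then discard the exact part by mollifying the closed form $f$ and integrating by parts --- but by a genuinely different construction. The paper rescales $\Phi$ to the unit ball and invokes Theorem 3.3.3 of \cite{Sc}, i.e.\ a solution operator for $d$ with zero boundary values on $\bB$ producing $\psi\in W^{1,q}_0(\bB)$ with $d\psi=d\varphi$ and $\|\psi\|_{W^{1,q}}\leq C\|d\varphi\|_{L^q}$, and then rescales back; this is precisely the Bogovski\u{\i}-type shortcut you mention at the end as an alternative. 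You instead build the correction explicitly: the Newtonian potential, the splitting $\Phi=d\eta+R\Phi$, Calder\'on--Zygmund bounds for $\nabla\eta$ and $\nabla(R\Phi)$, Young's inequality on $\{|z|<3r\}$ for the zeroth-order terms (which is indeed where the correct $r$-scaling has to come from, as you observe), and a cutoff to restore compact support. Your route buys self-containedness and transparent scaling of the constants, at the price of a longer computation and of a support enlarged to $B(x,2r)$ --- harmless, since $\vec{\F}^{q}_{x}$ admits arbitrary radii and the normalization only changes by the factor $2^{-n/q'}$. The paper's route is shorter but rests on an external existence theorem whose bound must still be carried through the dilation. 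Both arguments tacitly take $q<\infty$ in the second part, which is all that is needed in the later applications of the lemma.
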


 \begin{proof}

 Let $\Phi=\sum_{I}\Phi_{I}e^{I}\in \mathcal C^{\infty} (\bR^{n},\Lambda^{n-\ell})$. It follows from the fact that
$\displaystyle d\Phi=\sum_{I,j}\partial_{j}\Phi_{I}e^{j}\wedge e^{I}$, that
 \begin{equation} \left\|d\Phi\right\|_{L^{q}(\bR^{n},\Lambda^{n-\ell+1})}\leq\sum_{I,j}\left\|\partial_{j}\Phi_{I}\right\|_{L^{q}(\bR^{n})}\leq\left\|\nabla\Phi\right\|_{L^{q}(\bR^{n},\Lambda^{n-\ell})}.
 \end{equation}

 Thus, for all $x\in\bR^{n}$, we have $\vec{\F}^{q}_{x}\subset\vec{\F}^{q}_{x,d}$ so that
 (\ref{pointwise2a}) follows from the definition of the maximal functions $\vec{\mathcal M}_{q}f$
 and $\vec{\mathcal M}_{q,d}f$.

 Assume  now that  $f$ is a locally integrable closed form. Remark first that, for
 $\phi$ and $\psi$  bounded compactly supported  such that $d\psi=d\phi$, we have

 \begin{equation}\label{egalite}
 \int f\wedge \phi =\int f\wedge \psi.
 \end{equation}
Indeed, we can assume by regularization that $f$ is a smooth
function on some open set containing the supports of $\phi$ and
$\psi$. Moreover, $f$ may be written as $dg$, with $g$ a smooth
function on this open set. So the equality follows from
integration by parts.

Now, let $x\in\bR^{n}$ and
$\Phi=\sum_{I}\Phi_{I}e^{I}\in\vec{\F}^{q}_{x,d}$ supported in $
 B_{(x,r)}$. We put $\varphi(y)=r^{n}\Phi(x+ry)$
for all $y\in\bR^{n}$. Then $\varphi$ is supported in $\bB$ and
 \begin{equation}
 d\varphi(y)=r^{n+1}\sum_{I,j}(\partial_{j}\Phi_{I})(x+ry)e^{j}\wedge e^{I}=r^{n+1}d\Phi(x+ry).
\end{equation}
  So, we obtain

  \begin{equation}
  \left\|d\varphi\right\|_{L^{q}(\bB,\Lambda^{n-\ell+1})}
 = r\left\|d\Phi\right\|_{L^{q}(\bR^{n},\Lambda^{n-\ell+1})}\leq\left|B_{(x,r)}\right|^{-\frac{1}{q'}},
  \end{equation}
  according to the definition of $\Phi\in\vec{\F}^{q}_{x,d}$. To
  conclude for the lemma, it is sufficient to find $\psi$ in $
  W^{1,q}(\bR^n,\Lambda^{n-\ell})$ supported in $\bB$ and
such that $d\psi=d\varphi$ with
\begin{equation}\label{sol-d}
\|\psi\|_{W^{1,q}(\bR^n,\Lambda^{n-\ell})}
  \leq C\left\|d\varphi\right\|_{L^{q}(\bB,\Lambda^{n-\ell+1})}.
\end{equation}
Indeed, if we  let $\Psi(y)=\psi_{r}(y-x)$,  then
$C^{-1}{\Phi}\in \vec{\F}^{q}_{x}$, and $d\Psi=d\Phi$, so
that $\int f\wedge \Phi = \int f\wedge \Psi$.

So we conclude easily from the following lemma.

\begin{lem}\label{le-d}
Let $1<q<\infty$  and $1\leq\ell\leq n-1$. Let $\bB$ be the unit
ball. Let $\varphi\in L^\infty(\bR^n, \Lambda^{\ell})$ compactly
supported in $\bB$ such that $d\varphi$ is in $L^q(\bR^n,
\Lambda^{\ell+1})$. Then there exists $\psi\in
W^{1,q}(\bR^n,\Lambda^{\ell})$ vanishing outside $\bB$, such that
$d\psi=d\varphi$. Moreover, we can choose $\psi$ such that
$$\|\psi\|_{W^{1,q}(\bR^n,\Lambda^{\ell})}
    \leq C\left\|d\varphi\right\|_{L^{q}(\bR^n,\Lambda^{\ell+1})}$$
for some uniform constant $C$.
\end{lem}

\begin{proof} The existence of a form $\psi\in W^{1,q}_0(\bB,\Lambda^{\ell})$ such that $d\psi=d\varphi$ is given by Theorem 3.3.3 of \cite{Sc}.
Moreover, one has the  inequality
$$\|\psi\|_{W^{1,q}(\bB,\Lambda^{\ell})}
    \leq C\left\|d\varphi\right\|_{L^{q}(\bB,\Lambda^{\ell+1})}.$$
Then $\psi$ extends into a form of $W^{1,q}(\bR^n,\Lambda^{\ell})$
when given the value $0$ outside the unit ball. We still note
$\psi $ the form on $\bR^n$, which is supported by $\bB$.
\end{proof}
This allows to conclude for the proof of Lemma \ref{maximal-d}.
\end{proof}

\section{Wedge products}
 We are interested in estimates of wedge products of two differential forms
 of degree $\ell$ and $n-\ell$ respectively, with $1\leq \ell\leq n-1$.  Recall that,
for $f=\sum_{I}f_{I}e^{I}\in \mathcal{C}(\mathbb
R^{n},\Lambda^{\ell})$ and $g=\sum_{J}g_{J}e^{J} \in
\mathcal{C}(\mathbb R^{n},\Lambda^{\mathfrak n-\ell})$,  with $I$
varying among all ordered $\ell$-tuples $1\leq
i_{1}<\ldots<i_{\ell}\leq n$ and $J$ among all ordered
$n-\ell$-tuples, we put
\begin{equation}
f\wedge g=\sum_{I,J}\left(f_{I}\cdot g_{J}\right)e^{I}\wedge
e^{J}.
\end{equation}
The $n$-form $f\wedge g$ identifies with a function via the Hodge
operator. It is clear that the wedge product can also be  defined
as soon as products are. In particular, it is the case when $f\in
L^p(\mathbb R^{n},\Lambda^{\ell})$ and $g\in L^q(\mathbb
R^{n},\Lambda^{n-\ell})$, with $\frac 1p+\frac 1q\leq 1$. Using the
results of \cite{BIJZ} and \cite {BF}, it is also the case when
one of the two forms belongs to the Hardy space $\H^p(\bR^n,
\Lambda^\ell)$ while the other one is in the dual space. Moreover,
it is proved that
\begin{equation}
f\wedge g\in
L^{1}(\bR^{n},\Lambda^{n})+\H^{\Phi}_\omega(\bR^{n},\Lambda^{n})
\end{equation}
if $f\in \H^{1}(\mathbb R^{n},\Lambda^{\ell})$ and $g \in
\bmo(\mathbb R^{n},\Lambda^{n-\ell})$, while
\begin{equation}
 f\wedge g\in L^{1}(\bR^{n},\Lambda^{n})+\H^{p}(\bR^{n},\Lambda^{n})
\end{equation}
if $p<1$,   $f\in \H^{p}(\mathbb R^{n},\Lambda^{\ell})$ and $g \in
\Lambda_{n(\frac{1}{p}-1)}(\mathbb R^{n},\Lambda^{n-\ell})$. Here
$\H^{\Phi}_\omega(\bR^{n},\Lambda^{n})$ is the Hardy Orlicz space
associated to the function $\Phi(t)=\frac{t}{\log(e+t)}$ and $\omega(x)=(\log (e+|x|))^{-1}$.

\medskip

We are now interested in improving these estimates when $f$ and
$g$ are closed. The $\div$-$\curl$ lemma can be generalized to
closed forms: this has already been observed by Lou and Mc Intosh
in \cite{LM1} when $\frac 1p+\frac 1q=1$. In general, we can state
the following.

\begin{thm}\label{main1}
Let $\frac{n}{n+1}<p\leq 1$ and $1\leq\ell\leq n-1$. Let $1<q\leq
\infty$ be such that $\frac 1r:=\frac 1p+\frac 1q\leq
\frac{n+1}{n}$. Then, if $f\in \H^{p}_{d}(\mathbb
R^{n},\Lambda^{\ell})\cap L^{q'}(\mathbb R^{n},\Lambda^{\ell})$
and $g \in L^{q}(\mathbb R^{n},\Lambda^{n-\ell})$ is such that
$dg=0$, then $ f\wedge g\in\H^{r}_{d}(\bR^{n},\Lambda^{n})$.
Moreover, there exists a constant $C$ not depending on $f$ and
$g$, such that
\begin{equation}
\left\| g\wedge f\right\|_{\H^{r}_{d}(\bR^{n},\Lambda^{n})}\leq C
\left\|g\right\|_{L^{q}(\bR^{n},\Lambda^{n-\ell})}\left\|f\right\|_{\H^{p}_{d}(\bR^{n},\Lambda^{\ell})}.
\label{controle1}
\end{equation}
\end{thm}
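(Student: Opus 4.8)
The plan is the following. Since $q>1$ we have $q'<\infty$, so by H\"older's inequality the coefficients of $f\wedge g$ lie in $L^1(\bR^n)$; an $n$-form is automatically closed, hence $\H^r_d(\bR^n,\Lambda^n)=\H^r(\bR^n)$ once $n$-forms are identified with functions, and the statement reduces to $\|f\wedge g\|_{\H^r(\bR^n)}\lesssim\|f\|_{\H^p}\|g\|_{L^q}$. I fix once and for all $q_0>n$ with $\frac1{q_0}<\frac{n+1}n-\frac1r$ (possible as long as $\frac1r<\frac{n+1}n$; the borderline case $\frac1r=\frac{n+1}n$ I would treat separately, by a limiting argument), and by Lemma~\ref{classical-result1}(ii) applied to the function $f\wedge g$ it suffices to estimate $\|\vec{\mathcal M}_{q_0}(f\wedge g)\|_{L^r}$, noting that for an $n$-form $\vec{\mathcal M}_{q_0}$ coincides with the scalar grand maximal function.

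The heart of the argument is a pointwise estimate. Fix $x$ and $\psi\in\vec{\F}^{q_0}_x$, supported in $B=B(x,\rho)$. Since $dg=0$ we have $(f\wedge g)\wedge\psi=f\wedge(\psi g)$ and $d(\psi g)=d\psi\wedge g$; writing $f=d\alpha$ on a neighbourhood of $\overline B$ (every closed form is locally exact — if $f$ is only locally integrable one first regularizes, as in the proof of Lemma~\ref{maximal-d}) and integrating by parts, $\int(f\wedge g)\wedge\psi=\pm\int\alpha\wedge(d\psi\wedge g)$. The crucial observation is that $h:=d\psi\wedge g$ is a \emph{closed} $(n-\ell+1)$-form supported in $B$ (because $dg=0$ and $d^2=0$), with $\int_{\bR^n}h=\int d(\psi g)=0$ in the borderline degree $\ell=1$; hence, by the solvability of $d\theta=h$ on a ball with the Sobolev estimate (the Bogovski\u\i/Poincar\'e operator, cf. Lemma~\ref{le-d} and \cite{Sc}), there is an $(n-\ell)$-form $\theta$, supported in a fixed dilate of $B$, with $d\theta=h$ and $\|\theta\|_{W^{1,s}(\bR^n)}\lesssim\rho\,\|h\|_{L^s(B)}$, where $\frac1s:=\frac1q+\frac1{q_0}$. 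Note $\frac1s<\frac1q+\bigl(\frac{n+1}n-\frac1r\bigr)=\frac{n+1}n-\frac1p\le\frac1n$, so $s>n$, which is exactly what is needed to use the maximal function of exponent $s$. A second integration by parts then gives $\int(f\wedge g)\wedge\psi=\pm\int\alpha\wedge d\theta=\pm\int f\wedge\theta$; since a suitable multiple $c\theta$ of $\theta$ lies in $\vec{\F}^s_x$ with $c^{-1}\lesssim\rho\,\|h\|_{L^s(B)}\,|B|^{1/s'}$, and since $\|h\|_{L^s(B)}\le\|\nabla\psi\|_{L^{q_0}}\|g\|_{L^q(B)}\lesssim\rho^{-1}|B|^{-1/q_0'}\|g\|_{L^q(B)}$, all powers of $|B|$ cancel and one obtains
\[
\Bigl|\int(f\wedge g)\wedge\psi\Bigr|\lesssim\Bigl(\frac1{|B|}\int_B|g|^q\Bigr)^{1/q}\vec{\mathcal M}_s f(x),
\]
so that, taking the supremum over $\psi$,
\[
\vec{\mathcal M}_{q_0}(f\wedge g)(x)\lesssim\bigl(M(|g|^q)(x)\bigr)^{1/q}\,\vec{\mathcal M}_s f(x),
\]
with $M$ the Hardy--Littlewood maximal operator.

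It remains to deduce the $L^r$ bound. By Lemma~\ref{control}, $\|\vec{\mathcal M}_sf\|_{L^p}\lesssim\|f\|_{\H^p}$ (legitimate since $\frac1s<\frac{n+1}n-\frac1p$). When $q=\infty$ this already proves the theorem, since $s=q_0$ and the first factor is the constant $\|g\|_\infty$, giving $\|f\wedge g\|_{\H^p}\lesssim\|g\|_\infty\|f\|_{\H^p}$ (the $\ell$-form analogue of Theorem~\ref{main01} and of the estimate in \cite{ART}). When $q<\infty$, the first factor is only in weak $L^q$ (with norm $\lesssim\|g\|_{L^q}$), and the pointwise bound alone yields, via the level-set inequality $|\{uv>\lambda\}|\le|\{u>\mu\}|+|\{v>\lambda/\mu\}|$ optimized in $\mu$, the weak-type estimate $\|\vec{\mathcal M}_{q_0}(f\wedge g)\|_{L^{r,\infty}}\lesssim\|f\|_{\H^p}\|g\|_{L^q}$ (recall $\frac1r=\frac1p+\frac1q$). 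To upgrade this to the strong $L^r$ bound I would bring in the a priori hypothesis $f\in L^{q'}$: estimating $\int f\wedge\theta$ directly by H\"older together with $W^{1,s}\hookrightarrow L^{s^*}$ ($\frac1{s^*}=\frac1s-\frac1n$, $s^*\ge q$) produces the complementary pointwise bound $\vec{\mathcal M}_{q_0}(f\wedge g)(x)\lesssim\bigl(M(|g|^q)(x)\bigr)^{1/q}\bigl(M(|f|^{t'})(x)\bigr)^{1/t'}$ for $t'$ just below $q'$, whose second factor is in $L^{q'}$, and one combines the two estimates to conclude. I expect this last step to be the main obstacle: it is exactly the regime $r<1$ in which the product structure loses a degree of integrability (the target exponent $r$ is strictly smaller than $q$), so that passing from the weak to the strong $L^r$ estimate is not a routine interpolation and really requires exploiting the cancellation of the product of closed forms — this, together with the endpoint $\frac1r=\frac{n+1}n$, is where the work lies.
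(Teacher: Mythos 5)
Your pointwise estimate is correct, and it is essentially an extension to finite $q$ of the paper's \emph{second} argument (the ART-style proof given for the endpoint $q=\infty$): you replace the test form $\psi g$ by a compactly supported primitive $\theta$ of $d(\psi g)=d\psi\wedge g$ controlled in $W^{1,s}$, which is exactly the mechanism of Lemmas \ref{maximal-d} and \ref{le-d} (note only that $\psi g$ is not bounded when $g\in L^q$, so you need the solvability result of \cite{Sc} for $L^s$ data rather than Lemma \ref{le-d} verbatim). The paper's own proof of Theorem \ref{main1} goes a different route: it builds a global potential $h$ with $dh=f$, $\delta h=0$ via Riesz transforms (Lemma \ref{lemmedeMeyer}), proves the maximal estimate $\|\Msob(f)\|_{L^p}\lesssim\|f\|_{\H^p}$, writes $f\wedge g=d(h\wedge g)$ and then follows Theorem II.3 of \cite{CLMS}. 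Your scheme has the merit of treating all $q$ uniformly by the grand maximal function machinery of Section 3, but as written it does not close.

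The gap is exactly where you flag it, and the repair you propose cannot work. Because you apply H\"older with the full exponent $q$ on $g$, the factor $\bigl(M(|g|^q)\bigr)^{1/q}$ is only in $L^{q,\infty}$, and you obtain only a weak-type $L^{r,\infty}$ bound. The ``complementary'' estimate using $f\in L^{q'}$ inevitably introduces $\|f\|_{L^{q'}}$ into the constant, whereas \eqref{controle1} must hold with a constant depending only on $\|f\|_{\H^p}$ and $\|g\|_{L^q}$ (the $L^{q'}$ hypothesis only serves to make $f\wedge g$ well defined a priori, and is removed by the density remark at the start of the paper's proof); so no combination of your two bounds can yield the theorem. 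The standard fix is to lose an epsilon on $g$ rather than on $f$: run your H\"older step with some $\bar q<q$, so that $\frac1s=\frac1{q_0}+\frac1{\bar q}$ and the first factor becomes $\bigl(M(|g|^{\bar q})\bigr)^{1/\bar q}$, which lies in $L^q$ with norm $\lesssim\|g\|_{L^q}$ since $M$ is bounded on $L^{q/\bar q}$ with $q/\bar q>1$; then conclude by H\"older $L^q\times L^p\to L^r$ together with Lemma \ref{control}. This needs $\frac1{q_0}+\frac1{\bar q}<\frac{n+1}n-\frac1p$, which has room precisely when $\frac1r<\frac{n+1}n$. That exposes the second gap: the theorem allows $\frac1r=\frac{n+1}n$ (e.g.\ $p=1$, $q=n$), and there your framework fails structurally --- no admissible $q_0$ exists for Lemma \ref{classical-result1}(ii) at the exponent $r=\frac n{n+1}$ --- so deferring the endpoint to ``a limiting argument'' leaves a genuinely unproved case rather than a routine one.
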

\begin{proof}
Remark that the forms $f\in \H^{p}_{d}(\mathbb
R^{n},\Lambda^{\ell})\cap L^{q'}(\mathbb R^{n},\Lambda^{\ell})$
are dense in $\H^{p}_{d}(\mathbb R^{n},\Lambda^{\ell})$: just take
$f *P_\varepsilon$, where $P_t$ is the Poisson kernel, to approach
$f$. Remark also that the assumptions can be made symmetric: just
replace $\ell$ by $n-\ell$.

To adapt the proof given in \cite{CLMS} the main point is given in
the next lemma, which has its own interest.

\begin{lem}\label{lemmedeMeyer}
Let $\frac{n}{n+1}<p\leq 1$ and $1\leq\ell\leq n-1$.  Then, for
$f\in \H^{p}_{d}(\mathbb R^{n},\Lambda^{\ell})$, there exists
$h\in L^{p^*}_{d}(\mathbb R^{n},\Lambda^{\ell-1})$ such that
$dh=f$ and $\delta h=0$, with $\frac 1{p^*}=\frac 1p-\frac 1n$.
Moreover $h$ is unique up to the addition of a constant form, and
there exists some uniform constant $C$ such that
\begin{equation}\label{meyer}
\|\Msob(f)\|_p\leq C\|f\|_{\mathcal H^p},
\end{equation}
 with $$\Msob (f)(x)=\sup_{t>0}\frac
1{t|B(x,t)|}\int_{B(x,t)}|h(y)-h_{B(x,t)}|dy.$$
\end{lem}
Recall that $h_{B(x,t)}$ is the mean of $h$ over the ball $B(x,t)$,
which is well defined since $h$ is in $L^{p^*}$ and $p^*>1$.

Remark that $\Msob (f)$ is independent on the choice of $h$ since $h$ is unique up to the addition of a constant form.
\begin{proof}
The case $\ell=1$ is Lemma II.2 of \cite{CLMS}. So it is
sufficient to consider $\ell>1$. Let us first remark that the
uniqueness is direct, since in $L^{p^*}(\bR^n, \Lambda^{\ell-1}) $
only constants are $d$-closed and $\delta$-closed. Assume that $h$
is a solution. Then all the derivatives $\partial_j h_I$ are in the
Hardy space $\H^p(\bR^n)$. Indeed, we use the fact that, by
definition of $\Delta_{\ell-1}$, we have the identities
$$\Delta_{\ell-1}h=\delta f, \qquad \qquad \partial_j h=
(\partial_j (-\Delta_{\ell-1})^{-1/2})(
(-\Delta_{\ell-1})^{-1/2}\delta)f.$$ But both operators arising in
the last expression, that is $\partial_j
(-\Delta_{\ell-1})^{-1/2}$ and $(-\Delta_{\ell-1})^{-1/2}\delta$,
are linear combinations of Riesz transforms and preserve Hardy
spaces. Indeed, since $\Delta_{\ell-1}$ is given by the Laplacian,
coefficients by coefficients, the same is valid for all its
powers. Furthermore, we have
$$\Vert \nabla h\Vert_{\mathcal H^p}\le C\Vert f\Vert_{\mathcal H^p}.$$

Conversely, given $f$, we can use these formulas to fix
the values of $\partial_j h_I$ in the Hardy space $\H^p(\bR^n)$.
Using this lemma for one-forms, we know the existence of $h_I\in
L^{p^*}(\bR^n)$ having these functions as derivatives. It is
elementary to see that the form $h=\sum h_I e^I$ is such that $dh=f$ and
$\delta h=0$, using commutation properties of the operators.
Finally, we write \eqref{meyer} for each $f_j=\partial_jh_I$ to obtain the
inequality for $f$.
\end{proof}
It is elementary to adapt the rest of the proof of Theorem II.3 in
\cite{CLMS}, once the lemma has been settled, and we leave it to
the reader. So this gives the  proof of Theorem \ref{main1}.
Remark that  $f\wedge g$ can be defined in the distribution sense
without the additional assumption that $g\in L^{q'}(\mathbb
R^{n},\Lambda^{\ell})$: we pose $f\wedge g = d(h\wedge g)$, with
$h$ given by Lemma \ref{lemmedeMeyer}. Indeed, $h\wedge g$ is in
the Lebesgue space $L^s$, with $s>1$ given by $\frac 1s = \frac
1r-\frac 1n$. So its exterior derivative is well defined as a
distribution.
\end{proof}

\medskip

Following the ideas of \cite{ART}, let us sketch another proof for
the endpoint $q=\infty$.

\begin{proof}[Proof for the endpoint]

Let $f\in \H^{p}_{d}(\mathbb R^{n},\Lambda^{\ell})$ and $g \in
L^{\infty}(\mathbb R^{n},\Lambda^{n-\ell})$ such that $dg=0$.  We
want to prove that
$$ \mathcal M \left(f\wedge g\right)(x)\leq C
\left\|g\right\|_{L^{\infty}(\bR^{n},\Lambda^{n-\ell})}\vec{\mathcal
M}_{\infty,d} f(x),
$$
from which we conclude directly by using Lemma \ref{control} and
Lemma \ref{maximal-d}. By linearity we can assume that
$\|g\|_\infty=1$. In order to estimate $\M (f\wedge g)(x)$, we
have to consider
$$\int \varphi_{x,t} f\wedge g, \qquad \qquad
\varphi_{x,t}(y)=t^{-n}\varphi ((x-y)/t).$$ Here $\varphi$ is
chosen smooth and supported in the unit ball as in
\eqref{maximal}. It is sufficient to prove the existence of some
uniform constant $c$ such that $c\varphi_{x,t}\, g$ belongs
to $\vec{\F}^{\infty}_{x,d}$. This follows from the inequality
$$\|\varphi_{x,t}\|_\infty + t\|d\varphi_{x,t}\|_\infty\leq
|B_{(x,t)}|^{-1}.$$ Indeed, since $g$ is closed, we have the
equality $d(\varphi_{x,t}\, g)=d\varphi_{x,t}\wedge g$ and the
uniform norm of a wedge product is bounded by the product of
norms.

This finishes the proof.
\end{proof}
\section{$\BMO$ estimates}

Let us first recall some facts on $BMO(\bR^{n})$ and weighted Hardy-Orlicz
spaces.

Given a continuous function
$\mathcal P :\left[0,\infty\right)\rightarrow\left[0,\infty\right)$ increasing
 from 0 to $\infty$ (but not necessarily convex, $\mathcal P$ is called the Orlicz function), and given a positive measurable function $\omega$,
 the weighted Orlicz space $L^{\mathcal P}_\omega(\bR^{n})$ consists in the set of functions $f$ such that
\begin{equation}
\left\|f\right\|_{L^{\mathcal P}_\omega(\bR^{n})}:=\inf\left\{k>0:\int_{\bR^{n}}\mathcal P(k^{-1}\left|f\right|)\omega(x)dx\leq 1\right\}
\end{equation}
is finite. The Hardy Orlicz space $\H^{\mathcal P}_\omega(\bR^{n})$ (resp. the local Hardy Orlicz space $\h^{\mathcal P}_\omega(\bR^{n})$ is the space of tempered distributions $f$ such that $\mathcal Mf$ (resp. $\mathcal M^{(1)}f$) belongs to $L^{\mathcal P}_\omega(\bR^{n})$.
 We will consider here the Orlicz space
 associated to the function $\Phi(t)=\frac{t}{\log(e+t)}$
and the weight $(\log (e+|x|))^{-1}$, as mentioned in the introduction. The space $L^{\Phi}_\omega(\bR^{n})$ is not a normed space. Let us state the following properties of the function  $\Phi$ and $L^{\Phi}_\omega$.
\begin{eqnarray}
 \quad \Phi(s+t) &\leq & \Phi(s)+\Phi(t), \label{sub-add} \qquad\qquad \qquad\qquad  \mbox{\rm for all }\ s,t>0.\\
 \quad \Phi(st) &\geq & \Phi(s)\Phi(t), \label{upper-mult} \qquad\qquad \qquad\qquad  \mbox{\rm for all }\ s,t>0.\\
 \Phi(st)&\leq &  s + e^t-1 \qquad\qquad\qquad\qquad \mbox {\rm for all }\ s,t>0.\label{conj}\\
  \Phi(st)&\leq & \log (e+d) \;( s +\frac {1}d (e^t-1)) \qquad\qquad \mbox {\rm for all }\ s,t, d>0.\label{conjugate}
\end{eqnarray}
The two first inequalities are elementary. The third one is given in \cite{BIJZ}. For $d>0$, we write that $\Phi(d)\Phi(st)\leq \Phi((sd)t)$, and use the previous one with $sd$ in place of $s$. 

Next, by using the fact that $\Phi(4)>2$ and \eqref{upper-mult} we obtain the inequality
$$\Phi\left(\frac{s+t}4\right)\leq \frac {\Phi(s)+\Phi(t)}2,$$ from which we conclude that
\begin{equation}\label{additivity}
\left\|f+g\right\|_{L^{\Phi}_\omega(\bR^{n})}\leq 4\left\|f\right\|_{L^{\Phi}_\omega(\bR^{n})}+4\left\|g\right\|_{L^{\Phi}_\omega(\bR^{n})}.
\end{equation}
 We will also need the fact that products of integrable functions
with functions in the exponential class are in $L^\Phi$. More
precisely, we will use the following lemma, for $B$ a ball with radius $1$. It is a direct consequence of \eqref{conjugate}.
\begin{lem}\label{holder} For $c>0$ given, there exists $C$ such that, for $d>0$,
\begin{equation}
\int_B \Phi(|fg|)\frac {dx}{\log (e+d)}\leq C  \int_B |f| dx +\frac {C}d \int_B (e^{c|g|}-1)dx.
\end{equation}
\end{lem}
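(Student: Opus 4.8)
The plan is to reduce the statement directly to inequality \eqref{conjugate}. Fix $c>0$ and let $B$ be a ball of radius $1$. At a point $x\in B$ where $g(x)\neq 0$, I would apply \eqref{conjugate} with the choice $s=|f(x)|$, $t=c|g(x)|$, and $d$ the given parameter, obtaining
\begin{equation*}
\Phi(|f(x)g(x)|)=\Phi\!\left(\frac{|f(x)|}{c}\cdot c|g(x)|\right)\leq \log(e+d)\left(\frac{|f(x)|}{c}+\frac1d\bigl(e^{c|g(x)|}-1\bigr)\right),
\end{equation*}
where I have absorbed the constant $c^{-1}$ into $s$ (at points where $g(x)=0$ the left side is $\Phi(0)=0$ and there is nothing to prove). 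Dividing by $\log(e+d)$ and integrating over $B$ gives
\begin{equation*}
\int_B \Phi(|fg|)\frac{dx}{\log(e+d)}\leq \frac1c\int_B|f|\,dx+\frac1d\int_B\bigl(e^{c|g|}-1\bigr)\,dx,
\end{equation*}
which is the claimed estimate with $C=\max(1,c^{-1})$. Note the hypothesis that $B$ has radius $1$ is not actually needed for this pointwise-then-integrate argument; it is stated only because that is the form in which the lemma will be used later, and I would keep it for consistency with the surrounding text.

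There is essentially no obstacle here: the only point requiring a word of care is the rescaling step, namely that \eqref{conjugate} is invariant under replacing $s$ by $s/c$ and $t$ by $ct$ since only the product $st$ appears on the left, so the constant $c$ produces only the harmless factor $c^{-1}$ in front of $\int_B|f|$. I would present the proof in two or three lines, citing \eqref{conjugate} as the sole input.
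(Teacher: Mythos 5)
Your proof is correct and is exactly the argument the paper intends: the text states only that the lemma ``is a direct consequence of \eqref{conjugate}'', and your pointwise application of \eqref{conjugate} with $s=|f(x)|/c$, $t=c|g(x)|$, followed by integration over $B$, is precisely that deduction, yielding $C=\max(1,c^{-1})$. Your side remarks (the case $g(x)=0$, and the fact that the radius-$1$ hypothesis is not used here) are also accurate.
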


\medskip
Our main theorem is  the following.
\begin{thm}\label{bijz}
Let $f\in \H^{1}_{d}(\bR^{n},\Lambda^{\ell})$ and $g\in \bmo
(\bR^{n},\Lambda^{n-\ell})$ such that $dg=0$, then the product
$f\wedge g$ is in $\H^{\Phi}_\omega(\bR^{n},\Lambda^{n})$. Moreover,
there exists a uniform constant $C$ such that
\begin{equation}
\left\|f\wedge g\right\|_{\H^{\Phi}_\omega(\bR^{n},\Lambda^{n})}
\leq C\left\|f\right\|_{\H^{1}(\bR^{n},\Lambda^{\ell})}\left\|g\right\|_{\bmo(\bR^{n},\Lambda^{n-\ell})}.
\end{equation}

\end{thm}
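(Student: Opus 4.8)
The plan is to mimic the structure of the $q=\infty$ endpoint argument, estimating the grand maximal function $\mathcal M(f\wedge g)$ pointwise, but now splitting the test form $g$ in $\bmo$ into a bounded part and a part controlled by $\bmo$ oscillation, so that what was a clean $L^\infty$ bound becomes a bound of the form "integrable plus exponentially integrable", which by Lemma \ref{holder} lands in $L^\Phi_\omega$. By Lemma \ref{lemmedeMeyer} write $f=dh$ with $h\in L^{1^*}_d(\bR^n,\Lambda^{\ell-1})$, $\delta h=0$, and $\|\Msob(f)\|_1\leq C\|f\|_{\mathcal H^1}$; normalize $\|f\|_{\mathcal H^1}=\|g\|_{\bmo}=1$. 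To estimate $\mathcal M(f\wedge g)(x)$ we must control $\int \varphi_{x,t}\, f\wedge g$ for $0<t<\infty$, with $\varphi_{x,t}(y)=t^{-n}\varphi((x-y)/t)$ supported in $B:=B(x,t)$. Since $dg=0$ and $f=dh$, integration by parts gives
\begin{equation}
\int \varphi_{x,t}\, f\wedge g = \pm\int d\varphi_{x,t}\wedge h\wedge g = \pm\int d\varphi_{x,t}\wedge (h-h_B)\wedge g,
\end{equation}
the last equality because $d\varphi_{x,t}\wedge g$ is closed (as $dg=0$) and has the right support, so replacing $h$ by $h-c_B$ for a constant form $c_B$ costs nothing (this is exactly the cancellation argument of \eqref{egalite} in Lemma \ref{maximal-d}). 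Here I choose $c_B=h_B$, the mean of $h$ over $B$.

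Now split $g=g_1+g_2$ on $B$ with $g_1=(g-g_{B})\mathbf 1_{B}$ and we keep only the oscillatory behaviour; more precisely, using $\|d\varphi_{x,t}\|_\infty\le C t^{-n-1}$ and $|B|\sim t^n$,
\begin{equation}
\left|\int \varphi_{x,t}\, f\wedge g\right| \leq \frac{C}{t^{n+1}}\int_{B}|h-h_B|\,|g-g_B|\,dx + \frac{C}{t^{n+1}}|g_B|\int_{B}|h-h_B|\,dx.
\end{equation}
The second term is bounded by $C\,|g_B|\,\Msob(f)(x)$, and since $g\in\bmo$ one has $|g_B|\le C\log(e+1/t)$ for $t\le 1$ and $|g_B|\le C$ for $t\ge1$ (John--Nirenberg / the two conditions in \eqref{bmo}); so this term contributes at most $C\log(e+|x|+1/t)\,\Msob(f)(x)$ after taking the sup in $t$, which is harmless. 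For the first term, apply John--Nirenberg on $B$: $\frac1{|B|}\int_B \exp(c|g-g_B|)\,dx\le C$ for a fixed small $c>0$ depending only on $\|g\|_{\bmo}$ and $n$. Combining, one gets the pointwise control
\begin{equation}
\mathcal M(f\wedge g)(x) \leq C\,\sup_{t>0}\;\frac{1}{t|B(x,t)|}\int_{B(x,t)}|h-h_{B(x,t)}|\,\Phi^{-1}\!\big(\text{exp-type factor}\big)\,dx \;\lesssim\; \log(e+|x|)\cdot\big(\text{something in }L^1\big),
\end{equation}
and here is where Lemma \ref{holder} does the real work: testing the $L^\Phi_\omega$ quasi-norm of $\mathcal M(f\wedge g)$ against $\Phi$ with weight $\omega(x)=(\log(e+|x|))^{-1}$, the factor $\log(e+d)$ in Lemma \ref{holder} is absorbed by $\omega$, the $\int|f|$ term becomes $\int \Msob(f)\le C\|f\|_{\mathcal H^1}\le C$, and the $\frac1d\int(e^{c|g|}-1)$ term is bounded by John--Nirenberg. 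One then sums over a decomposition of $\bR^n$ into unit balls (the localization needed to apply Lemma \ref{holder}, which is stated for balls of radius $1$), using the quasi-subadditivity \eqref{additivity} of $\|\cdot\|_{L^\Phi_\omega}$, or rather a countable version of it, to reassemble the global estimate. Finally $f\wedge g=d(h\wedge g)$ is closed as a distribution, so it lies in $\mathcal H^\Phi_{\omega,d}(\bR^n,\Lambda^n)$.

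The main obstacle is the interplay between the $\bmo$ oscillation estimate and the weight: one must track carefully that, in the region $|x|$ large or $t$ small, the logarithmic losses coming from $|g_B|$ and from John--Nirenberg (which produces a factor like $\log(e+d)$ when one writes $|g-g_B|\le \frac1c\log(e+(e^{c|g-g_B|}-1))$ inside $\Phi$) are exactly matched by the weight $\omega$ and the gain in $\Phi(t)=t/\log(e+t)$ — this is precisely the point of inequality \eqref{conjugate} and Lemma \ref{holder}, and the reason the $L^1$ term of \eqref{nocancel} disappears here is the extra cancellation $\int d\varphi_{x,t}\wedge h\wedge g=\int d\varphi_{x,t}\wedge(h-h_B)\wedge g$ available for closed forms. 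A secondary technical point is justifying the integration by parts and the density/regularization step rigorously (approximating $f$ by $f*P_\varepsilon$ as in the proof of Theorem \ref{main1}, and $g$ by truncation/mollification while keeping $dg=0$ and $\bmo$ bounds), together with the passage from the local statement of Lemma \ref{holder} to a global one via a bounded-overlap covering by unit balls.
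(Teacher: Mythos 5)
Your overall strategy --- split off the mean $g_{B(x,t)}$, treat the oscillation by John--Nirenberg, and conclude with Lemma \ref{holder} on a bounded-overlap covering by unit balls --- is the same as the paper's, but both halves of your pointwise estimate have gaps, one of them fatal as written. The fatal one is the mean term: you bound $|g_{B(x,t)}|\le C\log(e+1/t)$ for $t\le 1$ and claim that after taking the supremum in $t$ this contributes at most $C\log(e+|x|)$ times an $L^1$ function. It does not: $\log(e+1/t)$ blows up as $t\to 0$ while $\frac1{t|B(x,t)|}\int_{B(x,t)}|h-h_{B(x,t)}|\,dy$ has no decay in $t$ (for $h$ differentiable at $x$ it tends to a nonzero limit), so $\sup_{t\le 1}\log(e+1/t)\cdot\frac1{t|B|}\int_B|h-h_B|$ is a scale-dependent loss that the purely spatial weight $\omega(x)=(\log(e+|x|))^{-1}$ cannot absorb. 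The paper avoids this by bounding $|g_{B(x,t)}|$ for $t<1$ not by a deterministic function of $t$ but by the local maximal function $\mathfrak M^{(1)}g(x)$, a function of $x$ alone which is exponentially integrable on unit balls (this is the content of the small lemma proved via John--Nirenberg and the weak $(1,1)$ maximal inequality); the product $\mathfrak M^{(1)}(g)\,\vec{\mathcal M}_{\infty}(f)$ then falls squarely under Lemma \ref{holder}. You need that $x$-dependent bound; the $\log(1/t)$ bound loses at small scales.

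The oscillation term also does not close as you set it up. After the integration by parts you must control $\frac{1}{t|B|}\int_B|h-h_B|\,|g-g_B|\,dy$, i.e.\ pair the normalized $L^1$ average of $|h-h_B|/t$ (which is all that Lemma \ref{lemmedeMeyer} provides, through $\Msob$) against the $\exp L$ oscillation of $g$. The pairing of $L^1$ with $\exp L$ does not yield a finite integral --- that failure is precisely why $\H^1\cdot \BMO$ is not contained in $L^1$ --- so your ``exp-type factor'' step is not an estimate. You would need an $L^s$-average version of $\Msob$ for some $s>1$, which the paper does not state. The paper's actual mechanism is different and cleaner: since $dg=0$, the form $(g-g_{B(x,r)})\varphi$ satisfies condition $(**)$ for every $q>1$ up to a constant (John--Nirenberg gives the $L^q$ bound, and $d\big((g-g_B)\varphi\big)=\pm\,d\varphi\wedge(g-g_B)$ gives the derivative bound), so for closed $f$ Lemmas \ref{maximal-d} and \ref{control} yield $\big|\int f\wedge(g-g_B)\varphi\big|\le C\vec{\mathcal M}_qf(x)$ with $\vec{\mathcal M}_qf\in L^1$ for $q>n$. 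The closedness of $f$ is thus exploited through the correction Lemma \ref{le-d} inside the grand maximal function, not through an explicit primitive $h$; I would rewrite your proof along those lines.
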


\begin{proof}
Recall that the wedge product $f\wedge g$ is well defined in the
distribution sense for $f\in\H^{1}_{d}(\bR^{n},\Lambda^{\ell})$
and $g\in\bmo(\bR^{n},\Lambda^{n-\ell})$. It is sufficient to have
an a priori estimate for $g$ bounded, which we assume from now on.
We also assume that
$\left\|g\right\|_{\bmo(\bR^{n},\Lambda^{n-\ell})}=1$ and $\left\|f\right\|_{\H^{1}(\bR^{n},\Lambda^{\ell})}=1$.  Then, for
$x\in\bR^{n}$ and $\varphi\in \F^{\infty}_{x}$ supported in
$B(x,r)$, we have
\begin{equation}\label{maximal-function}
\left|\int\left(f\wedge g\right)\varphi\right|\leq\left|\int f\wedge\left(g-g_{B(x,r)}\right)\varphi\right|+\left|\int f\wedge(g_{B(x,r)}\varphi)\right|,
\end{equation}
where
\begin{equation}
g_{B(x,r)}=\sum_{I}(g_{I})_{B(x,r)}e^{I}\text { if }
g=\sum_{I}g_{I}e^{I}.
\end{equation}
Let us first evaluate the second term of the sum
(\ref{maximal-function}). We have
\begin{eqnarray}
\left|\int
f\wedge(g_{B(x,r)}\varphi)\right|&=&\left|\sum_{I}(g_{I})_{B(x,r)}\int
f\wedge\varphi e^{I}\right|\nonumber\\&\leq&\left(1+\mathfrak
M^{(1)} g(x)\right)\vec{\mathcal
M}_{\infty}(f)(x)\label{second-part}
\end{eqnarray}
where $\mathfrak M^{(1)} g(x)=\sum_{I}\mathfrak M^{(1)}
g_{I}(x)e^{I}$, and for $h\in L^{1}_{loc}(\bR^{n})$ a scalar
valued function,
\begin{equation}
\mathfrak M^{(1)}h(x)=\sup\left\{\frac{1}{\left|B\right|}\int_{B}\left|h(y)\right|dy,x\in B \text { and } \left|B\right|<1\right\}.
\end{equation}
Indeed, when $r\geq 1$, the mean of $g$ is, by definition of
$\bmo$, bounded by
$\left\|g\right\|_{\bmo(\bR^{n},\Lambda^{n-\ell})}=1$, while, for
$r<1$, it is bounded by the maximal function related to small
balls. For the first term, we proceed in the same way as for
bounded $g$. By John-Nirenberg Inequality, the form
$\left(g-g_{B(x,r)}\right)\varphi$ satisfies Condition $(**)$ for
all $q>1$ up to some uniform constant $C$. So, according to Lemma
\ref{maximal-d}, for $q$ large enough,
\begin{equation}\label{first-part}
\left|\int f\wedge\left(g-g_{B(x,r)}\right)\varphi\right|\leq
C\vec{\mathcal M}_{q}f(x).
\end{equation}
Taking in  (\ref{second-part}) and (\ref{first-part}) the supremum
over all $\varphi\in \F^{\infty}_{x}$ yields
\begin{equation}\label{decomposition2}
\mathcal M_{\infty}(f\wedge g)\leq C\left(\vec{\mathcal
M}_{q}(f)+\mathfrak M^{(1)} (g)\vec{\mathcal
M}_{\infty}(f)\right).
\end{equation}
 The first term is in $L^{1}(\bR^{n})$ under the assumption that $f\in\H^{1}_{d}(\bR^{n},\Lambda^{\ell})$
  according to Lemma \ref{control}. It remains to prove that the second term is in $L^{\Phi}_\omega(\bR^{n})$
  to conclude the proof of our theorem. Roughly speaking, this
  is the consequence of the fact that the product of an integrable function with a function in the exponential class
  is in the Orlicz space $L^{\Phi}$. Indeed, we recall that, by John-Nirenberg's Inequality, there exists $c>$ such that, for each ball $B$,
 $$
  \int_{B}e^{c\left|g-g_{B}\right|}dx\leq C.$$
  The following lemma allows to have the same kind of estimate for $\mathfrak M^{(1)}(g)$ in place of $g$.
 \begin{lem}
 Let $c>0$ fixed. Then there exists some uniform constant $C$ such that
 \begin{equation}
 \int_{B}e^{\frac c2\left|\mathfrak M^{(1)}(g)\right|}\leq C  \int_{3B}e^{c|g|}dx,
 \end{equation}
 for every ball $B$ with radius  $1$. The ball $3B$ is the ball with same center and radius $3$.
 \end{lem}

 \begin{proof}
  We have
 \begin{equation}
 \int_{B}e^{\frac c2\left|\mathfrak M^{(1)}(g)(x)\right|}dx\leq e^c|B|+\int^{\infty}_{1}e^{\frac {cs}2}\left|\left\{x\in B: \mathfrak M^{(1)}(g)>s\right\}\right|ds.
 \end{equation}
 Let $g_{1}=g\chi_{3B}$, where $3B$ is the ball having same center as $B$, but with radius $3$ times the one of $B$ and $\chi_{3B}$ is the characteristic function of $3B$. It is easy to see that for all $s>0$, we have
$$
 \left\{x\in B : \mathfrak M^{(1)}g(x)>s \right\}\subset\left\{x\in\bR^{n}: \mathfrak M g_{1}(x)>s\right\}.
 $$
Thus from the above inclusion and the weak type $(1,1)$ boundedness of the Hardy-littlewood maximal function, we have
 $$
 \left|\left\{x\in B: \mathfrak M^{(1)}(g)>s\right\}\right|\leq\left|\left\{x\in\bR^{n}: \mathfrak M g_{1}(x)>s\right\}\right|\leq\frac{C}{s}\int_{\left\{\left|g_{1}\right|>\frac{s}{2}\right\}}\left|g_{1}(x)\right|dx
$$
 with  $C$ independent of $B$ and $g$, so that the integral in the second member becomes
 \begin{eqnarray*}
\int^{\infty}_{1}e^{\frac {cs}2}\left|\left\{x\in B: \mathfrak M^{(1)}(g)>s\right\}\right|ds
&\leq& C\int^{\infty}_{1}\frac{e^{\frac {cs}2}}{s}\left(\int_{\left\{\left|g\right|>\frac{s}{2}\right\}\cap 3B}\left|g(x)\right|dx\right)ds\\
&\leq&C' \int_{3B}e^{c\left|g(x)\right|}dx
 \end{eqnarray*}
 by using Fubini's Theorem.  We conclude for the lemma.
 \end{proof}
 Let us come back to the proof of Theorem \ref{bijz}. We  write $\bR^n$ as the almost disjoint union of balls $B_j=:\bB+j$, with $j\in \bZ^n$, with $\bB$ the unit ball centered at $0$. We  make use of Lemma \ref{holder} on each of these balls, with $d:=d_j:=(1+|j|)^{-N}$, with $N$ large enough so that $\sum d_j^{-1}<\infty$ while $\omega(x)\simeq (\log d_j)^{-1}$ for $x\in B_j$. We recall that by assumption $|g_{3B_j}|\leq C$, since
 $\left\|g\right\|_{\bmo(\bR^{n},\Lambda^{n-\ell})}=1$. So

 \begin{equation}\label{from-bmo}
    \int_{3B_j} e^{c\left|g(x)\right|}dx \leq C.
\end{equation}
 We finally have
 \begin{equation}
 \int_{\bR^n}\Phi\left(\mathfrak M^{(1)}(g)(x)
 \vec{\mathcal M}_{\infty}(f)(x)\right)\omega(x) dx \leq C\sum_{j\in \bZ^n} \int_{B_j} |f|dx,
 \end{equation}
 from which we conclude that $\mathfrak M^{(1)}(g)\vec{\mathcal M}_{\infty}(f)$
  has a bounded norm in $L^{\Phi}_\omega(\bR^{n})$ because of the finite overlapping of balls $B_j$.
  By finite additivity (\ref{additivity}) we have
$$
 \left\|\mathcal M(f\wedge g)\right\|_{L^{\Phi}_\omega(\bR^{n})}\leq 4C\left\|\vec{\mathcal M}_{q}f\right\|_{L^{\Phi}_\omega(\bR^{n})}+4C\left\|\mathfrak M^{(1)}(g)\vec{\mathcal M}_{\infty}(f)\right\|_{L^{\Phi}_\omega(\bR^{n})}
 $$
 so that
 \begin{equation}
 \left\|f\wedge g\right\|_{\H^{\Phi}_\omega(\bR^{n},\Lambda^{n})}\leq C
 \end{equation}
 for $\left\|f\right\|_{\H^{1}(\bR^{n},\Lambda^{\ell})}=1$.
 \end{proof}

 We have as well the following theorem.

 \begin{thm}\label{bijz2}
Let $f\in \h^{1}_{d}(\bR^{n},\Lambda^{\ell})$ and $g\in \BMO
(\bR^{n},\Lambda^{n-\ell})$ such that $dg=0$, then the product
$f\wedge g$ is in $\h^{\Phi}_\omega(\bR^{n},\Lambda^{n})$. Moreover,
there exists a uniform constant $C$ such that
\begin{equation}
\left\|f\wedge g\right\|_{\h^{\Phi}_\omega(\bR^{n},\Lambda^{n})}
\leq C\left\|f\right\|_{\h^{1}(\bR^{n},\Lambda^{\ell})}\left\|g\right\|_{\BMO(\bR^{n},\Lambda^{n-\ell})}.
\end{equation}
\end{thm}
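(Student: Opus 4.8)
The plan is to adapt the proof of Theorem~\ref{bijz} to the local setting, replacing global Hardy spaces $\H^p$ and global $\bmo$ estimates by their local (truncated) analogues. The key structural change is that the global maximal function $\M$ is replaced by the truncated maximal function $\M^{(1)}$, so that in the analogue of \eqref{maximal-function} one only tests against $\varphi\in\F^\infty_x$ supported in balls $B(x,r)$ with $r<1$. This is precisely the range where the first defining quantity in \eqref{bmo} controls the oscillation of $g$ via the John--Nirenberg inequality for $\BMO$, and one no longer needs to distinguish the case $r\geq 1$ as in the proof of \eqref{second-part}.

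First I would record the local version of the grand maximal function estimates: Lemma~\ref{classical-result1}(ii) and hence Lemma~\ref{control} hold for $\h^p$ with $\M^{(1)}$ in place of $\M$ and with atoms supported in balls of radius at most $1$, as already noted after \eqref{def-h}. Next I would run the splitting
\begin{equation*}
\left|\int (f\wedge g)\varphi\right|\leq \left|\int f\wedge (g-g_{B(x,r)})\varphi\right| + \left|\int f\wedge (g_{B(x,r)}\varphi)\right|
\end{equation*}
for $\varphi\in\F^\infty_x$ supported in $B(x,r)$ with $r<1$. For the first term, since $r<1$ and $g\in\BMO$, John--Nirenberg gives that $(g-g_{B(x,r)})\varphi$ satisfies condition $(**)$ for every $q>1$ up to a uniform constant (depending on $\|g\|_{\BMO}$), exactly as in the proof of Theorem~\ref{bijz}; Lemma~\ref{maximal-d} applied to the closed form $f$ then yields $\bigl|\int f\wedge (g-g_{B(x,r)})\varphi\bigr|\leq C\,\vec{\mathcal M}_q f(x)$. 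For the second term, writing $g=\sum_I g_I e^I$, it is bounded by $\bigl(1+\mathfrak M^{(1)}g(x)\bigr)\,\vec{\mathcal M}_\infty f(x)$, where again only balls of radius $<1$ intervene, so the local maximal function $\mathfrak M^{(1)}g$ is the right object. Taking the supremum over $\varphi$ gives the pointwise bound
\begin{equation*}
\M^{(1)}(f\wedge g)\leq C\bigl(\vec{\mathcal M}_q f + \mathfrak M^{(1)}(g)\,\vec{\mathcal M}_\infty f\bigr).
\end{equation*}

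The conclusion then follows the same two-part argument: the first term has $L^1$-norm controlled by $\|f\|_{\h^1}$ by the local version of Lemma~\ref{control}, and the second term is handled by the same covering of $\bR^n$ by unit balls $B_j=\bB+j$, applying Lemma~\ref{holder} on each $B_j$ with $d_j=(1+|j|)^{-N}$ and using the local John--Nirenberg exponential integrability of $g$ together with the maximal-function lemma bounding $\int_B e^{(c/2)|\mathfrak M^{(1)}g|}$ by $\int_{3B}e^{c|g|}$. Here one must note that for $g\in\BMO$ one no longer has the global bound $|g_{3B_j}|\leq C$, but this is not needed: Lemma~\ref{holder} only requires $\int_{3B_j}e^{c|g|}dx\leq C e^{c|g_{3B_j}|}$, and since $\BMO\subset\bmo_{\mathrm{loc}}$ up to a constant on the scale of the problem is not available, one instead absorbs the factor $e^{c|g_{3B_j}|}$ by choosing $N$ slightly larger — replacing $\mathfrak M^{(1)}g$ by $\mathfrak M^{(1)}g - g_{3B_j}$ on $B_j$ costs only a bounded additive constant per ball, which is harmless since $\Phi$ is subadditive \eqref{sub-add} and $\sum d_j^{-1}<\infty$. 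Finally, finite additivity \eqref{additivity} and the finite overlap of the $B_j$ give $\|\M^{(1)}(f\wedge g)\|_{L^\Phi_\omega}\leq C$, i.e.\ $\|f\wedge g\|_{\h^\Phi_\omega}\leq C\|f\|_{\h^1}\|g\|_{\BMO}$.

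The main obstacle I anticipate is the bookkeeping around the fact that $\BMO$ functions are not globally controlled: one has to make sure that every place where the proof of Theorem~\ref{bijz} used $\|g\|_{\bmo}=1$ to bound a mean $|g_B|$ or a large-ball average (notably in \eqref{second-part} and \eqref{from-bmo}) is replaced by a purely local estimate valid for balls of radius $<1$ or $\leq 3$, with the constant now depending on $\|g\|_{\BMO}$ rather than $\|g\|_{\bmo}$, and that the resulting per-ball constants are summable against the weight $\omega$. Once the localization is handled consistently, the rest is a verbatim adaptation, so I would simply indicate these changes and refer to the proof of Theorem~\ref{bijz} for the details.
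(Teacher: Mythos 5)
Your proposal is correct and follows essentially the same route as the paper: the paper's proof likewise reduces everything to the proof of Theorem~\ref{bijz}, observing that only $\mathfrak M^{(1)}g$ (hence only balls of radius $<1$) intervenes, and that the sole substantive change is replacing the bound $\int_{3B_j}e^{c|g|}dx\leq C$ by $\int_{3B_j}e^{c|g|}dx\leq C(1+|j|)^{M}$, coming from $|g_{3B_j}|\leq C\log(1+|j|)$, a polynomial loss absorbed by enlarging $N$ in the choice of $d_j$ in Lemma~\ref{holder} --- exactly your absorption argument. One small slip: subtracting $g_{3B_j}$ costs an additive term of size $|g_{3B_j}|\lesssim\log(1+|j|)$ per ball, not a bounded constant, but this is immaterial since the other mechanism you state (absorbing $e^{c|g_{3B_j}|}$ into $d_j$) is the one actually needed and is what the paper does.
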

The key point is that again we only have to make use of $\mathfrak M^{(1)}g$ and not of $\mathfrak M g$.
The only difference in the proof is the replacement of \eqref{from-bmo} by
\begin{equation}\label{from-BMO}
    \int_{3B_j} e^{c\left|g(x)\right|}dx \leq C(1+|j|)^M
\end{equation}
for some $M>0$: use the well-known fact that $g_{3B_j}\leq C \log(1+|j|)$.
\medskip

 The generalization of Theorem \ref{bijz} to $\H^p_\omega$ for $p<1$ is direct from \eqref{decomposition2}. Then the product
 $f\wedge g$ belongs to $\H^{\Phi_p}_\omega$, with $\Phi_p(t)
 =\left(\frac{t}{\log(e+t)}\right)^p$. Remark that, in this case, the product
 of a function in $\bmo$ and a function in $\H^p$ does not make
 sense as a distribution in general. But we can establish as above
 an a priori estimate, which allows to give a meaning to the wedge
 product of two closed forms.

\end{document}